\newtheorem{theorem}{Theorem}[section]
\newtheorem{lemma}[theorem]{Lemma}
\newtheorem{proposition}[theorem]{Proposition}
\newtheorem{definition}{Definition}
	\newtheorem{assumption}{A}
	\newtheorem{condition}{C}
\def\blfootnote{\xdef\@thefnmark{}\@footnotetext}
\title{{\itshape Center Manifolds of Differential Equations in Banach Spaces}}
\author{Y.-M. Chung
\\University of North Carolina at Greensboro
\\North Carolina,
United States, \\ and \\E. Schaal 
\\College of William and Mary, Virgina,
United States}
\begin{document}

Running Header: {\em{CENTER MANIFOLDS IN BANACH SPACES}}

Corresponding Author: Emily Schaal, eeschaal@email.wm.edu

\clearpage

\maketitle

\begin{abstract}

The center manifold is useful for describing the long-term behavior of a system of differential equations.  In this work, we consider an autonomous differential equation in a Banach space that has the exponential trichotomy property in the linear terms and Lipschitz continuity in the nonlinear terms.  Using the {\it spectral gap condition} we prove the existence and uniqueness of the center manifold. Moreover, we prove the regularity of the manifold with a few additional assumptions on the nonlinear term.  We approach the problem using the well-known Lyapunov-Perron method, which relies on the Banach fixed-point theorem.  The proofs can be generalized to a non-autonomous system.\footnotetext{34K19; 37L10; 65L10}

\end{abstract}

\section{Introduction}

The center manifold,  first introduced by Pliss
\cite{pliss} and Kelley \cite{kelley}, can roughly be defined as the steady states of the differential equations around which the behavior of the trajectories near enough to it will never be governed by neither the unstable nor the stable manifolds.  Another way of describing it is the set of initial conditions whose trajectories are bounded both forward and backward in time.  There are a number of works that research the existence and uniqueness of the center manifold: for example, in \cite{capiski}, Capiski presents a rigorous computer-assisted proof on the three-body problem; Chow and Liu \cite{chow} use the Hadamard graph transform method; and \cite{fenichel, hirsch, shoshitaishvili} discuss studies of arbitrarily smooth local center manifolds. The works of \cite{carr, edneral, vanderbauwhede} consider the manifold in $\mathbb{R}^n$ in the context of studying different varieties of systems. Alternatively, \cite{chicone, kelley, shashkov} study the global manifold using an integral operator in a Banach space with a general approach that makes use of minimal structure and assumptions.  Similar concepts are found in different studies, such as that of slow manifolds in both random and deterministic dynamical systems (see e.g. Lorenz in \cite{lorenz} and Roberts in \cite{roberts}) and inertial manifolds in fluid dynamics (see e.g. \cite{foias, temam} and references therein).

One of main applications is called the {\it center manifold reduction}.  Since the long term behavior of the system is contained in the center manifold, one can restrict the system on the manifold to obtain a system of lower dimensions that has fundamentally the same long term behavior as the original system.  To ensure the reduction exists, the manifold needs to be smooth enough that the solutions to the lower dimensional system exist.  Hence, the regularity of the manifold is fundamental; see e.g. \cite{sandstede}.  There are also developments for different methods of computing center manifolds, see \cite{jolly, fuming, hamzi, dellnitz, ito}. Researchers study bifurcation analysis on such reduced system; see e.g.  \cite{ferrara, liaw}.  

In this paper, we study the existence, uniqueness, and regularity of the center manifold by the Lyapunov-Perron method.  Like previous works, we consider the differential equation in a Banach space and make use of an integral operator, called {\it Lyapunov-Perron} operator.  The main contribution of this work is twofold.  First, we establish all the proofs outlined by Jolly and Rosa in \cite{jolly} where a numerical method for computing the manifold is presented. We modify the framework of \cite{jolly} to the context of ordinary differential equations in Banach space. Second, the idea of the proofs provides a basis for a simple algorithm to compute the center manifold (see authors forthcoming work \cite{schaal}).  The paper is organized as follows. In Section \ref{sec:framework}, the framework, notations, and assumptions are discussed; we also introduce the Lyapunov-Perron operator. In Section \ref{sec: existence}, we establish the existence and uniqueness of the manifold. In Section \ref{sec:regularity of the manifold}, we make some extensions to the framework and follow the same line of proof for the derivative of the map. We prove that the map whose graph gives the center manifold is $\mathcal{C}^1$. This proof is the first step in the induction to show that the center manifold is $\mathcal{C}^k$ for general $k$.

\section{Framework}
\label{sec:framework}
Consider a nonlinear ordinary differential systems in a Banach space $E$ that can be decomposed as such: $E = X \times Y \times Z$, where $u \in E$ takes the form $u = x + y + z$. The space has associated norm $\|u\| = \max\{\|x\|,\|y\|,\|z\| \}$:
\begin{equation}\label{bigsystem}
\begin{aligned}
\dot{x} &= Ax + F(u)  \\
\dot{y} &= By + G(u)  \\
\dot{z} &= Cz + H(u).
\end{aligned}
\end{equation}
We have that $A \in \mathcal{L}(X,X)$, $B \in \mathcal{L}(Y,Y)$, and $C \in \mathcal{L}(Z,Z)$, where $\mathcal{L}$ is the space of linear operators; we also have $F(u) \in \mathcal{C}(E,X),$ $G(u) \in \mathcal{C}(E,Y)$, and $H(u) \in \mathcal{C}(E,Z)$. We make the following assumptions.
\begin{assumption}{Exponential Trichotomy Condition:}\label{assumption: exponential trichotomy}
	For $\alpha_x$, $\alpha_y$, $\beta_y$, $\beta_z$, $K_x$, $K_y$, and $K_z \in \mathbb{R}$ with ordering $\alpha_x < \alpha_y \leq \beta_y < \beta_z$ and $t \geq 0$,
	\begin{align}
	\|e^{tA}\| &\leq K_x e^{\alpha_x t}  &
	\|e^{-tC}\| &\leq K_z e^{-\beta_z t}\\
	\|e^{-tB}\| &\leq K_ye^{-\beta_y t} & \|e^{tB}\| &\leq K_y e^{\alpha_y t}.
	\end{align}
\end{assumption}
\begin{assumption}{Lipschitz Continuity of Nonlinear Terms:}\label{assumption: nonlinear lipschitz}
	For $u_1$ and $u_2 \in E$, there exist constants $\delta_x$, $\delta_y$, and $\delta_z \in \mathbb{R}_{> 0}$ and
	\begin{align}
	\|F(u_1) - F(u_2)\| &\leq \delta_x\|u_1 - u_2\|  \\
	\|G(u_1) - G(u_2)\| &\leq \delta_y\|u_1 - u_2\|  \\
	\|H(u_1) - H(u_2)\| &\leq \delta_z\|u_1 - u_2\|. 
	\end{align}
\end{assumption}
\begin{assumption}{Gap Condition:}\label{assumption: gap condition}
	Given A\ref{assumption: exponential trichotomy} and A\ref{assumption: nonlinear lipschitz}, the following inequalities hold
	\begin{align}
	\beta_y - \alpha_x &> K_x \delta_x + K_y \delta_y \\
	\beta_z - \alpha_y &> K_y \delta_y + K_z \delta_z. 
	\end{align}
\end{assumption}

A\ref{assumption: exponential trichotomy} defines bounds for the linear parts of each component. The stable component is bounded forward in time, the unstable component is bounded backward in time, and the center is bounded in both directions. This is a generalization of the exponential dichotomy condition. By classical results, A\ref{assumption: nonlinear lipschitz} guarantees there exists a unique solution to the ordinary differential equation denoted by $u(t,u_0)$. A\ref{assumption: gap condition} is the main assumption on the invariant manifold. For investigations of the gap condition that we use, see \cite{castaneda, latushkin}. These three assumptions allow us to study the behavior of the invariant manifold as the global behavior of the system. 

We follow Jolly and Rosa \cite{jolly} and define a parameter $\sigma(t)$ such that 
\begin{align}\label{eq:sigma}
\sigma(t) = 
\begin{cases}
\sigma_p & t \geq 0, \\
\sigma_n & t \leq 0
\end{cases}
\end{align}
and define the following ordering conditions with respect to the constants in A\ref{assumption: exponential trichotomy} and A\ref{assumption: nonlinear lipschitz}:
\begin{condition}{Relation of Constants:} \label{c1}
	\begin{align}
	\alpha_x < \sigma_n < \alpha_y \leq \beta_y < \sigma_p < \beta_z,
	\end{align}
\end{condition}
\begin{condition}{Choice of $\sigma$:}\label{c2} 
	\begin{align}
	\alpha_x + K_x\delta_x < \sigma_n < \beta_y - K_y\delta_y \\
	\alpha_y + K_y\delta_y < \sigma_p < \beta_z - K_z\delta_z.
	\end{align}
\end{condition}
We define a function space $\mathcal{F}_\sigma$ such that each global trajectory $\phi$ of the differential system where $\phi: Y \rightarrow E$ is found as a fixed point in
\begin{align}
\mathcal{F}_\sigma = \{\phi \in \mathcal{C}(\mathbb{R},E):  \smash{\displaystyle\sup_{t \in \mathbb{R}}} (e^{-\sigma(t)}\|\phi(t)\|) = \|\phi\|_\sigma < \infty\}.
\end{align}
This is the space of all continuous functions from $\mathbb{R}$ to the space $E$ that are exponentially bounded, and it is these functions that we wish to study. $\mathcal{F}_\sigma$ is also a Banach space with the $\|\cdot\|_\sigma$ norm.

Finally, let $y_0 \in Y$, $\phi(t,y_0):=\phi(t) \in \mathcal{F}_\sigma$ and define the Lyapunov-Perron operator $\mathcal{T}: \mathcal{F}_\sigma \times Y \rightarrow \mathcal{F}_\sigma$ as
\begin{align}
\label{ali:fixedptmap}
\mathcal{T}(\phi(t), y_0) = \underbrace{e^{tB}y_0 + \int_0^t e^{(t-s)B}G(\phi(s))ds}_\text{I} - 
\underbrace{\int_t^\infty e^{(t-s)C}H(\phi(s))ds}_\text{II} \nonumber \\
+ \underbrace{\int_{-\infty}^t e^{(t-s)A}F(\phi(s))ds}_\text{III},
\end{align}
where I is the $Y$ component, II is the $Z$ component, and III is the $X$ component. 

Finally, we introduce some shorthand for $\beta_y$ and $\alpha_y$:
\begin{align}
c(t) =
\begin{cases}
\beta_y & \mbox{ when } t \leq 0 \\
\alpha_y & \mbox{ when } t \geq 0.
\end{cases}
\end{align} We discuss the two cases often, so this allows us to keep the proofs concise. 

\section{Existence of the Center Manifold}\label{sec: existence}	

The main construction of the center manifold in this work is to show that the manifold is a graph of some Lipschitz function $\Phi: Y \rightarrow X\times Z$.  In order words, $X$ (stable) and $Z$ (unstable) components can be represented by the $Y$ component on the manifold.  We begin our construction by establishing the following estimate which is essential throughout this paper.

\begin{lemma}\label{lemma1}
    Let \begin{align} 
    \delta_\phi = \max\{&\sup_{t\in\mathbb{R}}e^{(\alpha_x-\sigma(t)t)}\int_{-\infty}^t K_x\delta_x e^{(\sigma(s)-\alpha_x)s}ds, \label{case3} \\ &\sup_{t\in\mathbb{R}}e^{(c(t)-\sigma(t)t)}\int_0^t K_y\delta_ye^{(\sigma(s)-c(t))s}ds, \label{case1} \\ &\sup_{t\in\mathbb{R}}e^{(\beta_z-\sigma(t)t)}\int_t^\infty K_z\delta_ze^{(\sigma(s)-\beta_z)s}ds\}. \label{case2}
    \end{align}
    Then, by C\ref{c1} and C\ref{c2}, $\delta_\phi = \max\{\frac{K_y\delta_y}{\beta_y -\sigma_n}, \frac{K_y\delta_y}{\sigma_p-\alpha_y}, \frac{K_z\delta_z}{\beta_z -\sigma_p}, \frac{K_x\delta_x}{\sigma_n - \alpha_x} \} < 1.$
\end{lemma}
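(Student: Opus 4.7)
The plan is to reduce each of the three suprema defining $\delta_\phi$ to an explicit closed form by carrying out the (elementary) exponential integrals, and then to invoke C\ref{c2} termwise to conclude $\delta_\phi<1$. Since $\sigma(t)$ and (for the $Y$-integral) $c(t)$ are piecewise constant across $t=0$, each supremum naturally splits into the two regimes $t\ge 0$ and $t\le 0$ in which all exponents become honest constants; convergence of the improper integrals at $\pm\infty$ is furnished by the strict ordering in C\ref{c1}.

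First I would handle \eqref{case3}. Because $\sigma(s)>\alpha_x$ on both half-lines, $\int_{-\infty}^t e^{(\sigma(s)-\alpha_x)s}\,ds$ is finite. For $t\le 0$ it equals $e^{(\sigma_n-\alpha_x)t}/(\sigma_n-\alpha_x)$ and the prefactor cancels to give the constant $1/(\sigma_n-\alpha_x)$. For $t\ge 0$, splitting at $0$ produces a convex combination of $1/(\sigma_n-\alpha_x)$ and $1/(\sigma_p-\alpha_x)$ with weights $e^{(\alpha_x-\sigma_p)t}$ and $1-e^{(\alpha_x-\sigma_p)t}$; since $\sigma_n<\sigma_p$ the first coefficient is larger, so the maximum is at $t=0$ with value $1/(\sigma_n-\alpha_x)$. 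Thus the first sup equals $K_x\delta_x/(\sigma_n-\alpha_x)$. The $Z$-integral \eqref{case2} is the mirror image of this calculation: on $t\ge 0$ a direct computation yields $1/(\beta_z-\sigma_p)$ identically; on $t\le 0$, splitting $\int_t^\infty$ at $0$ yields a convex combination of $1/(\beta_z-\sigma_n)$ and $1/(\beta_z-\sigma_p)$ whose maximum $1/(\beta_z-\sigma_p)$ is again attained at $t=0$.

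The $Y$-integral \eqref{case1} is the most delicate case because $c(t)$ also flips. For $t\ge 0$ (so $c(t)=\alpha_y$ and $\sigma(s)=\sigma_p$ on $[0,t]$) one obtains $K_y\delta_y(1-e^{(\alpha_y-\sigma_p)t})/(\sigma_p-\alpha_y)$, whose supremum $K_y\delta_y/(\sigma_p-\alpha_y)$ is approached as $t\to+\infty$. For $t\le 0$ (so $c(t)=\beta_y$ and $\sigma(s)=\sigma_n$ on $[t,0]$) one first rewrites $\int_0^t=-\int_t^0$ and takes the magnitude; the resulting quantity is $K_y\delta_y(1-e^{(\beta_y-\sigma_n)t})/(\beta_y-\sigma_n)$, whose supremum $K_y\delta_y/(\beta_y-\sigma_n)$ is approached as $t\to-\infty$. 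Assembling the four explicit values recovers the stated formula for $\delta_\phi$, and $\delta_\phi<1$ is then immediate from C\ref{c2}, which consists of exactly the four inequalities $K_x\delta_x<\sigma_n-\alpha_x$, $K_y\delta_y<\beta_y-\sigma_n$, $K_y\delta_y<\sigma_p-\alpha_y$, $K_z\delta_z<\beta_z-\sigma_p$.

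The main obstacle I anticipate is purely bookkeeping: correctly handling the orientation of $\int_0^t$ when $t<0$ in \eqref{case1} (where the sup should be read as being over the absolute value of the integrand, consistent with its eventual use in bounding the $Y$-component of the Lyapunov-Perron operator \eqref{ali:fixedptmap}), and keeping track of which of the two candidate half-line values dominates in each case. Once those sign and orientation issues are pinned down, the entire argument reduces to routine exponential calculus.
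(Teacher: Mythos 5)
Your proposal is correct and takes essentially the same route as the paper: split each supremum at $t=0$ where $\sigma(t)$ and $c(t)$ switch, evaluate the elementary exponential integrals on each half-line, and read off $\delta_\phi<1$ termwise from C\ref{c2}. If anything, your handling of the $Y$-term \eqref{case1} is more careful than the paper's, which claims the suprema of the two branches are attained at $t=0$ (where the bracketed expressions actually vanish) rather than approached as $t\to\mp\infty$ as you correctly note --- the resulting values, and hence the lemma, are unaffected.
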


\begin{proof}
    We begin with \eqref{case1}. In this integral, $s$ and $t$ take on the same sign, so we can evaluate to get 
    \begin{align}\label{int1}K_y \delta_y\sup_{t\in\mathbb{R}}e^{(c(t)-\sigma(t))t}\bigg[\frac{e^{(\sigma(t)-c(t))s}}{\sigma(t)-c(t)}\bigg]_0^t.\end{align} 
    Note that we have
    \begin{align}
    c(t) - \sigma(t) = \beta_y - \sigma_n > 0 & \mbox{ when } t \leq 0 \label{tleq0} \\
    c(t) - \sigma(t) = \alpha_y - \sigma_p < 0 & \mbox{ when } t \geq 0; \label{tgeq0} 
    \end{align}
    this is a result of C\ref{c1}. For $t \leq 0$, \eqref{int1} evaluates to  
    \begin{align}\label{int2}K_y \delta_y\sup_{t\leq0}\bigg[-\frac{e^{(\beta_y-\sigma_n)t}}{\beta_y-\sigma_n}+\frac{1}{\beta_y-\sigma_n}\bigg]. \end{align} By  \eqref{tleq0}, the supremum for \eqref{int2} is reached when $t = 0$ and we get $\frac{K_y\delta_y}{\beta_y -\sigma_n}$. For $t \geq 0$, \eqref{int1} evaluates to \begin{align}\label{int3}K_y \delta_y\sup_{t\geq0}\bigg[\frac{1}{\sigma_p-\alpha_y}-\frac{e^{(\alpha_y-\sigma_p)t}}{\sigma_p-\alpha_y}\bigg].\end{align} By \eqref{tgeq0}, the supremum for \eqref{int3} is reached when $t = 0$ and we get $\frac{K_y\delta_y}{\sigma_p-\alpha_y}$.
    
    Moving on to \eqref{case2}, we can evaluate to get
    \begin{align}
    K_z \delta_z\sup_{t\leq 0}e^{(\beta_z-\sigma_n)t}\bigg(\bigg[\frac{e^{(\sigma_n-\beta_z)s}}{\sigma_n-\beta_z}\bigg]_t^0+\bigg[\frac{e^{(\sigma_p-\beta_z)s}}{\sigma_p-\beta_z}\bigg]_0^\infty\bigg) &\mbox{ when } t \leq 0 \label{int4} \\
    K_z \delta_z\sup_{t\geq 0}e^{(\beta_z-\sigma_p)t}\bigg[\frac{e^{(\sigma_p-\beta_z)s}}{\sigma_p-\beta_z}\bigg]_t^\infty  &\mbox{ when } t\geq 0 \label{int5}.
    \end{align} Note that $\sigma_p - \beta_z < 0$, so \eqref{int4} simplifies to
    \begin{align} \label{int6}
    &K_z \delta_z\sup_{t\leq 0}\bigg(\frac{e^{(\beta_z-\sigma_n)t}}{\sigma_n-\beta_z}-\frac{1}{\sigma_n-\beta_z}-\frac{e^{(\beta_z-\sigma_n)t}}{\sigma_p-\beta_z}\bigg) \nonumber \\
    = &K_z \delta_z\sup_{t\leq 0}\bigg(\frac{(\sigma_p-\sigma_n)e^{(\beta_z-\sigma_n)t}-(\sigma_p-\beta_z)}{(\sigma_p-\beta_z)(\sigma_n-\beta_z)}\bigg)
    \end{align}
    which has its supremum when $t = 0$ and we obtain $\frac{K_z \delta_z}{\beta_z-\sigma_p}$. Next, \eqref{int6} simplifies directly to $
    \frac{K_z \delta_z}{\beta_z-\sigma_p}$.
    
    Moving on to \eqref{case3}, we can evaluate to get 
    \begin{align}
    K_x \delta_x\sup_{t\leq 0}e^{(\alpha_x -\sigma_n)t}\bigg[\frac{e^{(\sigma_n-\alpha_x)s}}{\sigma_n-\alpha_x}\bigg]_{-\infty}^t  &\mbox{ when } t\leq 0 \label{int8} \\
    K_x \delta_x\sup_{t\geq 0}e^{(\alpha_x-\sigma_p)t}\bigg(\bigg[\frac{e^{(\sigma_n-\alpha_x)s}}{\sigma_n-\alpha_x}\bigg]_{-\infty}^0+\bigg[\frac{e^{(\sigma_p-\alpha_x)s}}{\sigma_p-\alpha_x}\bigg]_0^t\bigg) &\mbox{ when } t \geq 0 \label{int7}.
    \end{align}
    Note that $\sigma_n - \alpha_x > 0$ by C\ref{c1}, so \eqref{int8} simplifies directly to $\frac{K_x\delta_x}{\sigma_n - \alpha_x}$. Then, \eqref{int7} simplifies to
    \begin{align}
    &K_x \delta_x\sup_{t\geq 0}\bigg(\frac{e^{(\alpha_x-\sigma_p)t}}{\sigma_n-\alpha_x}+\frac{1}{\sigma_p-\alpha_x}-\frac{e^{(\alpha_x-\sigma_p)t}}{\sigma_p-\alpha_x}\bigg) \\
    =&K_x \delta_x\sup_{t\geq 0}\bigg(\frac{(\sigma_p-\sigma_n)e^{(\alpha_x-\sigma_p)t}+(\sigma_n-\alpha_x)}{(\sigma_p-\alpha_x)(\sigma_n-\alpha_x)}\bigg).
    \end{align}
    This has its supremum when $t=0$ and we obtain $\frac{K_x\delta_x}{\sigma_n - \alpha_x}$. Each of $\frac{K_y\delta_y}{\beta_y -\sigma_n}, \frac{K_y\delta_y}{\sigma_p-\alpha_y}, \frac{K_z\delta_z}{\beta_z -\sigma_p}, \frac{K_x\delta_x}{\sigma_n - \alpha_x}$ is less than one as a result of C\ref{c2}.
\end{proof}

The key to the construction is that the map $\mathcal{T}$ is a contraction mapping, so the fixed point exists by the Banach fixed point Theorem, and the center manifold can be found in that fixed point.  In the first step, we need to show that the $\mathcal{T}$ map is well defined.

\begin{proposition}\label{prop: well-defined}
	Assume A\ref{assumption: exponential trichotomy}, A\ref{assumption: nonlinear lipschitz}, and C\ref{c1}. Let $y_0 \in Y$ and $\phi \in \mathcal{F}_\sigma$, then $$\mathcal{T}(\phi(t),y_0) \in \mathcal{C}(\mathbb{R}, E) \mbox{ and } \|\mathcal{T}(\phi,y_0)\|_\sigma < \infty.$$ 
\end{proposition}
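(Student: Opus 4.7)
The plan is to verify continuity of $t \mapsto \mathcal{T}(\phi, y_0)(t)$ and the $\sigma$-norm bound piece by piece, treating the four summands of $\mathcal{T}$ independently and combining only at the end. Essentially all of the hard calculation is already contained in Lemma \ref{lemma1}; my job is just to put each summand into the form handled there.

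For continuity, the linear term $t \mapsto e^{tB}y_0$ is continuous because $e^{tB}$ is a strongly continuous group on $Y$. Each integrand $(t,s) \mapsto e^{(t-s)L}N(\phi(s))$, with $L$ and $N$ the appropriate operator and nonlinearity, is jointly continuous, so the proper integral in term I is continuous in $t$ by an elementary argument. For the improper integrals in terms II and III, the absolute bounds derived below show that the integrands are dominated, uniformly on bounded $t$-intervals, by an integrable majorant in $s$; dominated convergence then yields continuity in $t$.

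For the $\sigma$-norm, I would multiply each summand by $e^{-\sigma(t)t}$ and take the supremum over $t \in \mathbb{R}$. For $e^{tB}y_0$, A\ref{assumption: exponential trichotomy} gives $\|e^{tB}\| \leq K_y e^{c(t)t}$ for every $t \in \mathbb{R}$ (the $t \leq 0$ bound follows from $\|e^{-tB}\| \leq K_y e^{-\beta_y t}$ via $t \mapsto -t$), and C\ref{c1} makes $(c(t) - \sigma(t))t \leq 0$ in both regimes, so this piece contributes at most $K_y\|y_0\|$ to the $\sigma$-norm. For term III, bounding $\|F(\phi(s))\| \leq \delta_x\|\phi\|_\sigma e^{\sigma(s)s}$ via A\ref{assumption: nonlinear lipschitz} and $\phi \in \mathcal{F}_\sigma$, then pulling $e^{-\sigma(t)t}$ inside, produces exactly the expression \eqref{case3} of Lemma \ref{lemma1} multiplied by $\|\phi\|_\sigma$, which the lemma bounds by $\delta_\phi\|\phi\|_\sigma < \|\phi\|_\sigma$. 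Terms I and II are handled identically, matching \eqref{case1} and \eqref{case2} respectively. Summing yields $\|\mathcal{T}(\phi, y_0)\|_\sigma \leq K_y\|y_0\| + 3\delta_\phi\|\phi\|_\sigma < \infty$.

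The main obstacle is careful bookkeeping rather than analysis: one must read term I as $-\int_t^0$ when $t < 0$ so that $s$ and $t$ remain on the same side of the origin, aligning the case with \eqref{case1} in Lemma \ref{lemma1}. Also, the Lipschitz hypothesis strictly controls only $\|F(\phi(s)) - F(0)\|$, so the bound $\|F(\phi(s))\| \leq \delta_x\|\phi\|_\sigma e^{\sigma(s)s}$ tacitly assumes the standard normalization $F(0) = G(0) = H(0) = 0$ at the equilibrium about which we linearize, a convention already built into Lemma \ref{lemma1}.
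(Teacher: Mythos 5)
Your proof is correct, and for the $\sigma$-norm bound it follows essentially the same route as the paper: bound each component using A\ref{assumption: exponential trichotomy} and A\ref{assumption: nonlinear lipschitz}, insert $e^{\sigma(s)s}e^{-\sigma(s)s}$, and reduce to the three suprema of Lemma \ref{lemma1}. (The paper takes a maximum over the three components rather than a sum, since the norm on $E$ is a max, yielding the slightly sharper $K_y\|y_0\|+\delta_\phi\|\phi\|_\sigma$ in place of your $K_y\|y_0\|+3\delta_\phi\|\phi\|_\sigma$; finiteness follows either way.) Where you genuinely diverge is the continuity argument. The paper fixes the sign of $t$ and the direction of approach, writes out $\mathcal{T}(\phi(t),y_0)-\mathcal{T}(\phi(d),y_0)$ as seven explicit terms, expands the operator differences $I-e^{(d-t)A}$, $I-e^{(d-t)B}$, $I-e^{(d-t)C}$ as power series $\sum_{n\geq 1}((d-t)L)^n/n!$ (legitimate since $A$, $B$, $C$ are bounded operators), and sends each term to zero; you instead invoke norm continuity of $t\mapsto e^{tL}$ for bounded $L$, joint continuity of the integrand for the proper integral, and dominated convergence for the two improper integrals, reusing the exponential majorants already needed for the norm bound. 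Your route is shorter and avoids the six-case analysis, at the cost of being less explicit; both are sound. Finally, you are right that the estimate $\|F(\phi(s))\|\leq\delta_x\|\phi\|_\sigma e^{\sigma(s)s}$ requires the normalization $F(0)=G(0)=H(0)=0$, which the paper never states: its own first display in this proof uses the same tacit convention, so this is a shared (and worth-flagging) omission rather than a gap in your argument --- though attributing the convention to Lemma \ref{lemma1}, which never mentions $F$, $G$, or $H$, is a slight misplacement.
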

\begin{proof}
	First, we show that $\|\mathcal{T}(\phi,y_0)\|_\sigma < \infty$. If we take the norm of \eqref{ali:fixedptmap}, apply assumptions A\ref{assumption: exponential trichotomy} and A\ref{assumption: nonlinear lipschitz}, multiply by $e^{\sigma(s) s} e^{-\sigma(s) s}$, multiply by $e^{-\sigma(t) t}$, and take the supremum over $s$ in each integral:
	\begin{align}
	e^{-\sigma(t) t}\|\mathcal{T}(\phi(t),y_0)\| \leq \max\bigg\{&K_x \delta_x \|\phi\|_\sigma e^{(\alpha_x-\sigma(t)) t} \int_{-\infty}^{t}e^{(\sigma(s)-\alpha_x) s}ds, \\
	K_ye^{(c(t) - \sigma(t)) t}|y_0| + &K_y \delta_y \|\phi\|_\sigma e^{(c(t) - \sigma(t)) t}\int_{0}^{t}e^{(\sigma(s)-c(t)) s} ds, \\
	&K_z \delta_z \|\phi\|_\sigma e^{(\beta_z-\sigma(t)) t} \int_{t}^{\infty}e^{(\sigma(s)-\beta_z) s} ds\bigg\}.
	\end{align}
	Taking the supremum over $t \in \mathbb{R}$ and applying \eqref{tleq0} and \eqref{tgeq0} to the first term in the $Y$ component gives a form to which we can apply Lemma \ref{lemma1}. The result is 
	\begin{align}
	\sup_{t \in \mathbb{R}}e^{-\sigma(t) t}\|\mathcal{T}(\phi(t),y_0)\| \leq K_y|y_0| + \|\phi\|_\sigma\max\bigg\{ \frac{K_y \delta_y}{\sigma(t) - c(t)}, \frac{K_z \delta_z}{\beta_z - \sigma(t)}, \frac{K_x \delta_x}{\sigma(t) - \alpha_x} \bigg\} < \infty.
	\end{align}
	
	Next, we show that $\mathcal{T}(\phi(t),y_0)$ is continuous in $t$. We split the proof into six cases: $t < 0$, $t > 0$, and $t = 0$ as $d \rightarrow t^+$ and $d \rightarrow t^-$. In each case, we assume that $d$ starts in a small enough ball around $t$ that it matches the sign of $t$. First, we consider $t > 0$ and take $d \rightarrow t^+$: 
	\begin{align}
	\mathcal{T}(\phi(t),y_0)& - \mathcal{T}(\phi(d),y_0) =[I-e^{(d-t)A}]\int_{-\infty}^te^{(t-s)A}F(\phi(s))ds - \int_t^d e^{(d-s)A} F(\phi(s)) ds \\
	&+(e^{tB}-e^{dB})y_0 + [I-e^{(d-t)B}]\int_0^t e^{(t-s)B}G(\phi(s))ds - \int_t^d e^{(d-s)B}G(\phi(s))ds \\
	&- \bigg[\int_t^d e^{(t-s)C}H(\phi(s))ds + [I-e^{(d-t)C}]\int_d^\infty e^{(t-s)C}H(\phi(s))ds\bigg] \\
	=&\underbrace{(e^{tB}-e^{dB})y_0}_\text{(I)} -\underbrace{\sum_{n=1}^{\infty}\frac{((d-t)B)^n}{n!}\int_0^t e^{(t-s)B}G(\phi(s))ds}_\text{(II)} - \underbrace{\int_t^d e^{(d-s)B}G(\phi(s))ds}_\text{(III)} \\
	&- \bigg[\underbrace{\int_t^d e^{(t-s)C}H(\phi(s))ds}_\text{(IV)} -\underbrace{\sum_{n=1}^{\infty}\frac{((d-t)C)^n}{n!}\int_d^\infty e^{(t-s)C}H(\phi(s))ds}_\text{(V)}\bigg] \\
	&-\underbrace{\sum_{n=1}^{\infty}\frac{((d-t)A)^n}{n!}\int_{-\infty}^te^{(t-s)A}F(\phi(s))ds}_\text{(VI)} - \underbrace{\int_t^d e^{(d-s)A} F(\phi(s)) ds}_\text{(VII)}.
	\end{align}
	As $d \rightarrow t^+$, $e^{dB} \rightarrow e^{tB}$ and (I) will approach zero. In terms (III), (IV), and (VII), as $d \rightarrow t^+$, the bounds on the integrals contract and each integral approaches zero. In (II), the bounds on the integral are finite and thus the integral will remain bounded while $\sum_{n=1}^{\infty}\frac{((d-t)B)^n}{n!}$ will approach zero as $d \rightarrow t^+$, forcing the term to zero. The summation terms in (V) and (VI) will also converge to zero. The indefinite integrals are bounded by the boundedness of the norm of the $\mathcal{T}$ map established in the first part of the proof, and thus the terms (V) and (VI) will approach zero as $d \rightarrow t^+$ and the limit as $d \rightarrow t^+$ of this expression will be zero. The same reasoning applies to all further cases. 
\end{proof}

We have now proved that the $\mathcal{T}$ is a well-defined operator.  The next step is the core of the construction---$\mathcal{T}$ is a contraction mapping.

\begin{proposition}\label{lipcont}
	Given assumptions A\ref{assumption: exponential trichotomy}, A\ref{assumption: nonlinear lipschitz}, A\ref{assumption: gap condition}, C\ref{c1}, and C\ref{c2}, for fixed $y_0 \in Y$, $\mathcal{T}(\cdot,y_0)$ is a a contraction mapping.
\end{proposition}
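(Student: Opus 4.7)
The plan is to estimate $\mathcal{T}(\phi_1, y_0) - \mathcal{T}(\phi_2, y_0)$ directly in the $\|\cdot\|_\sigma$ norm and identify the contraction constant with $\delta_\phi$ from Lemma \ref{lemma1}. Since the initial datum $y_0$ is held fixed, the linear term $e^{tB}y_0$ cancels in the difference, leaving three integral expressions, one per component:
\begin{align}
\mathcal{T}(\phi_1,y_0)(t) - \mathcal{T}(\phi_2,y_0)(t) &= \int_{-\infty}^t e^{(t-s)A}[F(\phi_1(s)) - F(\phi_2(s))]\,ds \\
&\quad + \int_0^t e^{(t-s)B}[G(\phi_1(s)) - G(\phi_2(s))]\,ds \\
&\quad - \int_t^\infty e^{(t-s)C}[H(\phi_1(s)) - H(\phi_2(s))]\,ds.
\end{align}
Because $E$ carries the max norm on $X \times Y \times Z$, each component can be treated separately.

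Next I would take norms, apply the exponential trichotomy bounds from A\ref{assumption: exponential trichotomy} to the three semigroups and the Lipschitz bounds from A\ref{assumption: nonlinear lipschitz} to the nonlinear differences, and use the defining inequality $\|\phi_1(s) - \phi_2(s)\| \leq e^{\sigma(s)s}\|\phi_1 - \phi_2\|_\sigma$ of $\mathcal{F}_\sigma$ to pull the common factor $\|\phi_1 - \phi_2\|_\sigma$ out of each integral. Multiplying through by $e^{-\sigma(t)t}$ and taking the supremum over $t \in \mathbb{R}$, the three quantities that appear are exactly the suprema \eqref{case3}, \eqref{case1}, and \eqref{case2} from Lemma \ref{lemma1}. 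This parallels the bound used in Proposition \ref{prop: well-defined}, with the $|y_0|$ contribution now replaced by zero. Invoking Lemma \ref{lemma1} then gives
\begin{align}
\|\mathcal{T}(\phi_1, y_0) - \mathcal{T}(\phi_2, y_0)\|_\sigma \leq \delta_\phi\, \|\phi_1 - \phi_2\|_\sigma,
\end{align}
and since C\ref{c2} forces $\delta_\phi < 1$, the contraction property is established.

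The only subtle point is the $Y$-component integral: its endpoints are $0$ and $t$, so the sign of $s$ in the integrand need not agree with the sign of $t$, and the operator norm of $e^{(t-s)B}$ must be bounded using $\|e^{\tau B}\| \leq K_y e^{\alpha_y \tau}$ when $t-s \geq 0$ versus $\|e^{-\tau B}\| \leq K_y e^{-\beta_y \tau}$ when $t - s \leq 0$. This is precisely the case split the $c(t)$ shorthand was designed to encode, so it amounts to careful bookkeeping rather than a genuine difficulty. All the real analytic work has been absorbed into Lemma \ref{lemma1}, and the contraction estimate follows immediately once the integrals are aligned with its hypotheses.
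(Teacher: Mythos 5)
Your proposal is correct and follows essentially the same route as the paper: cancel the $e^{tB}y_0$ term, bound each component using A\ref{assumption: exponential trichotomy} and A\ref{assumption: nonlinear lipschitz}, weight by $e^{-\sigma(t)t}$, and recognize the resulting suprema as exactly \eqref{case3}, \eqref{case1}, and \eqref{case2} so that Lemma \ref{lemma1} yields the contraction constant $\delta_\phi < 1$. Your remark about the sign split in the $Y$-component integral is precisely the role the $c(t)$ shorthand plays in the paper's estimate.
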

\begin{proof}
	Let $\phi_1$, $\phi_2 \in \mathcal{F}_\sigma$ and denote $\mathcal{T}(\phi_1(t),y_0):=\mathcal{T}(\phi_1(t))$ for a fixed $y_0 \in Y$. Take the norm of $\mathcal{T}(\phi_1(t)) - \mathcal{T}(\phi_2(t))$, then apply A\ref{assumption: exponential trichotomy} and A\ref{assumption: nonlinear lipschitz} to get
	\begin{align}
	\|\mathcal{T}(\phi_1(t)) - \mathcal{T}(\phi_2(t))\| \leq \max\bigg\{ K_x\delta_x\int_{-\infty}^t e^{(t-s)\alpha_x} \|\phi_1(s) - \phi_2(s)\|ds, \\ K_y\delta_y\int_0^t e^{(t-s)c(t)} \|\phi_1(s) - \phi_2(s)\|ds, K_z\delta_z\int_t^\infty e^{(t-s)\beta_z} \|\phi_1(s) - \phi_2(s)\|ds \bigg\}.
	\end{align}
	Multiply by $e^{-\sigma(t)t}$ and take the supremum over $s$ for each $e^{-\sigma(s)s}\|\phi_1(s) - \phi_2(s)\|$ term. Also, take the supremum over $t$ on each side of the expression:
	\begin{align}
	\sup_{t\in\mathbb{R}}e^{-\sigma(t)t}\|\mathcal{T}(\phi_1(t)) - \mathcal{T}(\phi_2(t))\| \leq \|\phi_1 - \phi_2\|_\sigma\max\bigg\{&\sup_{t\in\mathbb{R}}K_x\delta_xe^{(\alpha_x-\sigma(t))t}\int_{-\infty}^t e^{(\sigma(s)-\alpha_x)s}ds,\\
	\sup_{t\in\mathbb{R}}K_y\delta_ye^{(c(t)-\sigma(t))t}\int_0^t e^{(c(t)-\sigma(s))s}ds,
	&\sup_{t\in\mathbb{R}}K_z\delta_ze^{(\beta_z-\sigma(t))t}\int_t^\infty e^{(\sigma(s)-\beta_z)s} ds \bigg\}.
	\end{align}
	
	By Lemma \ref{lemma1}, this simplifies to 
	\begin{align}
	\|\mathcal{T}(\phi_1) - \mathcal{T}(\phi_2)\|_\sigma \leq \delta_\phi\|\phi_1 - \phi_2\|
	\end{align}
	where $\delta_\phi < 1$ is the Lipschitz constant.

\end{proof}

By the Banach Fixed Point Theorem in \cite{banach}, there exists a unique $\phi^* \in \mathcal{F}_\sigma$ such that $\phi^*(t,y_0)= \mathcal{T}(\phi^*(t),y_0)$ for fixed $y_0 \in Y$.  Since $\phi^*$ will play an important role in constructing the center manifold, we investigate some properties about it.  We show that the $\phi^*$ is a unique solution in $\mathcal{F}_\sigma$ to the original system.

\begin{proposition}\label{ufp}
	The fixed point of $\mathcal{T}(\cdot,y_0)$, denoted by $\phi^*(t,y_0)$, is characterized as the unique element in the function space that is the solution to \eqref{bigsystem} with initial condition $\phi(0,y_0)$.
\end{proposition}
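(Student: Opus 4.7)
The plan is to prove Proposition \ref{ufp} in two steps: first show that the fixed point $\phi^*$ is indeed a solution of \eqref{bigsystem}, and second show that any element of $\mathcal{F}_\sigma$ that solves \eqref{bigsystem} with the correct initial condition must itself be a fixed point of $\mathcal{T}(\cdot, y_0)$, so uniqueness follows from Proposition \ref{lipcont} together with the Banach fixed-point theorem.

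For the existence half, I would differentiate the identity $\phi^* = \mathcal{T}(\phi^*, y_0)$ component by component. Writing $\phi^* = x^* + y^* + z^*$ where
\begin{align*}
x^*(t) &= \int_{-\infty}^t e^{(t-s)A}F(\phi^*(s))\,ds, \\
y^*(t) &= e^{tB}y_0 + \int_0^t e^{(t-s)B}G(\phi^*(s))\,ds, \\
z^*(t) &= -\int_t^\infty e^{(t-s)C}H(\phi^*(s))\,ds,
\end{align*}
a Leibniz-rule calculation (with the boundary contributions cancelling the $-1$ in the $z$ term's sign and yielding the $G$ and $F$ nonlinearities in the other components) gives $\dot x^* = Ax^* + F(\phi^*)$, $\dot y^* = By^* + G(\phi^*)$, and $\dot z^* = Cz^* + H(\phi^*)$. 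Setting $t=0$ then identifies the initial condition as $\phi^*(0, y_0) = x^*(0) + y_0 + z^*(0)$, which is exactly the initial datum in the statement.

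For uniqueness, suppose $\psi \in \mathcal{F}_\sigma$ solves \eqref{bigsystem} with $\psi(0) = \phi^*(0, y_0)$. The idea is to convert each of the three component ODEs back into Lyapunov-Perron form using the classical variation-of-constants formula, and then show that the homogeneous contributions vanish in the limit because $\psi$ lies in $\mathcal{F}_\sigma$. Concretely, for the stable part, $\psi_x(t) = e^{(t-t_0)A}\psi_x(t_0) + \int_{t_0}^t e^{(t-s)A}F(\psi(s))\,ds$ for any $t_0 \leq t$; using A\ref{assumption: exponential trichotomy} and the $\mathcal{F}_\sigma$-bound $\|\psi(t_0)\| \leq \|\psi\|_\sigma e^{\sigma_n t_0}$ for $t_0 \leq 0$, the boundary term is dominated by $K_x \|\psi\|_\sigma e^{\alpha_x t} e^{(\sigma_n - \alpha_x)t_0}$, which tends to zero as $t_0 \to -\infty$ thanks to $\sigma_n > \alpha_x$ in C\ref{c1}. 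Entirely analogously, for the unstable part, representing $\psi_z$ as $e^{(t-t_1)C}\psi_z(t_1) - \int_t^{t_1} e^{(t-s)C}H(\psi(s))\,ds$ and using $\sigma_p < \beta_z$ lets me send $t_1 \to \infty$. The $y$ equation already has its natural initial condition at $t = 0$. Combining the three limits yields $\psi = \mathcal{T}(\psi, y_0)$, and Proposition \ref{lipcont} forces $\psi = \phi^*$.

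The main technical obstacle is justifying the Leibniz rule for the improper operator-valued integrals appearing in the definition of $\mathcal{T}$. The integrands are continuous and uniformly bounded on compact time intervals by Proposition \ref{prop: well-defined} together with Lemma \ref{lemma1}, and the exponential decay estimates used to define $\delta_\phi$ give dominating functions that permit the differentiation-under-the-integral-sign and the $t_0\to-\infty$, $t_1\to\infty$ passages; the rest of the argument is a routine exponential bookkeeping using only A\ref{assumption: exponential trichotomy} and the sign information recorded in C\ref{c1}.
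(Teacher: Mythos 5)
Your proposal is correct, and for the half of the statement that the paper actually writes out---showing that the fixed point $\phi^*$ solves \eqref{bigsystem}---you take the same route as the authors: differentiate each component of the identity $\phi^* = \mathcal{T}(\phi^*, y_0)$ in $t$ via the Leibniz rule and read off the three equations. Where you genuinely go beyond the paper is the converse direction. The paper's proof ends with the single clause ``which is unique given the choice of $\phi^*$,'' which asserts rather than proves that any $\mathcal{F}_\sigma$-solution with the given initial datum must coincide with $\phi^*$. Your variation-of-constants argument supplies exactly the missing content: writing $\psi_x(t) = e^{(t-t_0)A}\psi_x(t_0) + \int_{t_0}^t e^{(t-s)A}F(\psi(s))\,ds$ and bounding the boundary term by $K_x\|\psi\|_\sigma e^{\alpha_x t}e^{(\sigma_n-\alpha_x)t_0}$, which vanishes as $t_0\to-\infty$ by C\ref{c1}, and dually for the $Z$ component as $t_1\to\infty$ using $\sigma_p<\beta_z$, shows that $\psi$ is itself a fixed point of $\mathcal{T}(\cdot,y_0)$, whence $\psi=\phi^*$ by Proposition \ref{lipcont}. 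This is the standard and, in my view, necessary way to justify the word ``characterized'' in the statement; the paper's version proves only that the fixed point is \emph{a} solution, not that it is \emph{the} element of $\mathcal{F}_\sigma$ so characterized. Your exponential bookkeeping for the limits is correct as written (the signs in the $Z$-component representation and the identification $\psi_y(0)=y_0$ both check out), so the only thing I would add is a sentence confirming that $F(\psi(\cdot))$, $G(\psi(\cdot))$, $H(\psi(\cdot))$ inherit the $\sigma$-weighted bounds from A\ref{assumption: nonlinear lipschitz} so that the improper integrals in the limiting formulas converge---but that is the same estimate already used in Proposition \ref{prop: well-defined}.
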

\begin{proof}
	We show that $\phi^*(t,y_0)$ is a solution to the system. We start by taking derivatives with respect to $t$: 
	\begin{align}
	\dot{x} &= F(x,y,z) + A\int_{-\infty}^t e^{(t-s)A}F(\phi(s))ds  \\
	\dot{y} &= G(x,y,z) + Be^{tB}y_0 + B\int_0^t e^{(t-s)B}G(\phi(s))ds \\
	\dot{z} &= H(x,y,z) - C\int_t^\infty e^{(t-s)C}H(\phi(s))ds 
	\end{align}
	From the map, $x = \int_{-\infty}^t e^{(t-s)A}F(\phi(s))ds$, $y = e^{tB}y_0 + \int_0^t e^{(t-s)B}G(\phi(s))ds$, and $z = - \int_t^\infty e^{(t-s)C}H(\phi(s))ds$. Substituting in yields the system in \eqref{bigsystem} with the given initial condition, which is unique given the choice of $\phi^*$.
\end{proof}

\begin{definition}
	The center manifold is $\mathcal{M}_c := \{u_0 \in E: u(t,u_0) \in \mathcal{F}_\sigma \}.$
\end{definition}

We show the invariance of the center manifold.
\begin{proposition}
	$\mathcal{M}_c$ is invariant: if $u_0 \in \mathcal{M}_c$, then $u(t_0,u_0) \in \mathcal{M}_c$ for fixed $t_0$.
\end{proposition}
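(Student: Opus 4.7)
The plan is to exploit the autonomy of \eqref{bigsystem}: by uniqueness of solutions, $u(t,u(t_0,u_0)) = u(t+t_0,u_0)$ for every $t \in \mathbb{R}$. Therefore, showing $u(t_0,u_0) \in \mathcal{M}_c$ reduces to verifying that the time-translated trajectory $\psi(t) := u(t+t_0,u_0)$ lies in $\mathcal{F}_\sigma$. Continuity of $\psi$ in $t$ is inherited directly from the continuity of $u(\cdot,u_0)$, so the real content is the $\|\cdot\|_\sigma$-bound.

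Next, I would insert a factor of $e^{\sigma(t+t_0)(t+t_0)} e^{-\sigma(t+t_0)(t+t_0)}$ and write
\begin{align}
e^{-\sigma(t)t}\|\psi(t)\| \;=\; e^{\,\sigma(t+t_0)(t+t_0) - \sigma(t)t}\cdot e^{-\sigma(t+t_0)(t+t_0)}\|u(t+t_0,u_0)\| \;\leq\; e^{\,\Delta(t)}\,\|u(\cdot,u_0)\|_\sigma,
\end{align}
where $\Delta(t) := \sigma(t+t_0)(t+t_0) - \sigma(t)t$. The bound $\|u(\cdot,u_0)\|_\sigma < \infty$ is available because $u_0 \in \mathcal{M}_c$, so the task collapses to showing $\sup_{t \in \mathbb{R}} \Delta(t) < \infty$.

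The core step is a case analysis on the signs of $t$ and $t+t_0$, using the piecewise definition \eqref{eq:sigma} of $\sigma$. When $t$ and $t+t_0$ share the same sign (which covers all but a bounded window of $t$-values), the two $\sigma$-values agree and $\Delta(t)$ collapses to the constant $\sigma_p t_0$ or $\sigma_n t_0$. When they have opposite signs, this can only occur for $t$ in the compact interval between $0$ and $-t_0$, and on this interval $\Delta$ is an affine function of $t$, hence bounded. Combining the at most three regimes gives $\sup_t \Delta(t) \leq \max\{|\sigma_p t_0|,|\sigma_n t_0|\}$, independent of $u_0$.

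Putting this together yields $\|\psi\|_\sigma \leq e^{C(t_0)}\|u(\cdot,u_0)\|_\sigma < \infty$, so $\psi \in \mathcal{F}_\sigma$ and $u(t_0,u_0) \in \mathcal{M}_c$. I do not anticipate any genuine obstacles: everything reduces to bookkeeping on the exponent $\Delta(t)$, and the only subtlety is remembering to handle the transition region where $t$ and $t+t_0$ have opposite signs rather than naively canceling $\sigma(t+t_0)$ against $\sigma(t)$ as if $\sigma$ were constant.
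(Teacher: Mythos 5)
Your proof is correct and follows the same route as the paper: reduce, via the flow property of the autonomous system, to showing that the time-translate $t \mapsto u(t+t_0,u_0)$ lies in $\mathcal{F}_\sigma$. The paper's own proof stops there and simply asserts $u(t+t_0,u_0) \in \mathcal{F}_\sigma$, whereas you also verify the one genuinely nontrivial point --- that the weighted norm is stable under time-translation because $\Delta(t) = \sigma(t+t_0)(t+t_0) - \sigma(t)t$ is constant wherever $t$ and $t+t_0$ share a sign and affine on the remaining compact interval, hence bounded by $\max\{|\sigma_p t_0|, |\sigma_n t_0|\}$ --- so your write-up closes a step the paper leaves implicit.
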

\begin{proof}
	Take $u_0 \in \mathcal{M}_c$ such that $u(t, u_0) \in \mathcal{F}_\sigma$. Fix $t_0$. We need that $u_1 := u(t_0,u_0) \in \mathcal{M}_c$. Since $\mathcal{M}_c = \{u_0 \in E: u(t,u_0) \in \mathcal{F}_\sigma \}$, it remains to show that $u(t,u_1) \in \mathcal{F}_\sigma$. Then, by the fact that $u$ is autonomous, $u(t,u_1) = u(t,u(t_0,u_0))= u(t+t_0,u_0) \in \mathcal{F}_\sigma$. 
\end{proof}

Next, we want to characterize the manifold in terms of $\phi^*$. Let $\Phi$ be the map defined such that $\Phi: Y \rightarrow X \times Z$ by $\Phi(y_0) = \phi(0,y_0)|_{X \times Z}$. We show the following set equivalence. 

\begin{proposition}\label{uiscenter}
	Given Given A\ref{assumption: exponential trichotomy}, A\ref{assumption: nonlinear lipschitz}, A\ref{assumption: gap condition}, C\ref{c1}, and C\ref{c2}, we have $$
	\mathcal{M}_c = \{u_0 : \Phi(y_0) = x_0 + y_0 \} = \emph{graph}{ \Phi}. $$
\end{proposition}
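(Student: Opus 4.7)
The plan is to prove set equality by two inclusions, with the main tool being Proposition \ref{ufp}: the fixed point $\phi^*(\cdot,y_0)$ of $\mathcal{T}(\cdot,y_0)$ is the unique element of $\mathcal{F}_\sigma$ that solves \eqref{bigsystem}. Note that the statement should read $\Phi(y_0)=x_0+z_0$ since $\Phi$ takes values in $X\times Z$; I will prove this version.

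For the inclusion $\mathcal{M}_c\subset\operatorname{graph}\Phi$, I would take $u_0=x_0+y_0+z_0\in\mathcal{M}_c$, so the trajectory $u(\cdot,u_0)\in\mathcal{F}_\sigma$. The goal is to show $u(\cdot,u_0)$ is a fixed point of $\mathcal{T}(\cdot,y_0)$; then Banach's theorem gives $u(\cdot,u_0)=\phi^*(\cdot,y_0)$, and evaluating at $t=0$ and projecting onto $X\times Z$ yields $x_0+z_0=\Phi(y_0)$. To identify $u(\cdot,u_0)$ with $\mathcal{T}(u(\cdot,u_0),y_0)$, I would apply variation of constants componentwise. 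The $Y$-component already reads $y(t)=e^{tB}y_0+\int_0^t e^{(t-s)B}G(u(s))\,ds$, matching term I. For the $X$-component, I would write $x(t)=e^{tA}x_0+\int_0^t e^{(t-s)A}F(u(s))\,ds$ and rearrange to obtain $x(t)=\int_{-\infty}^t e^{(t-s)A}F(u(s))\,ds+e^{tA}\bigl(x_0-\int_{-\infty}^0 e^{-sA}F(u(s))\,ds\bigr)$. Using $\|e^{tA}\|\le K_x e^{\alpha_x t}$ from A\ref{assumption: exponential trichotomy}, $\|F(u(s))\|\le\delta_x\|u\|_\sigma e^{\sigma(s)s}$ from A\ref{assumption: nonlinear lipschitz}, and $\sigma_n>\alpha_x$ from C\ref{c1}, the integral $\int_{-\infty}^0 e^{-sA}F(u(s))\,ds$ converges absolutely. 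The bracketed correction term must vanish because if it were any nonzero element $w\in X$, then $\|e^{tA}w\|\cdot e^{-\sigma(t)t}$ would tend to infinity as $t\to-\infty$ (by the trichotomy bound running backwards), contradicting $u\in\mathcal{F}_\sigma$. A mirror-image argument using $\|e^{-tC}\|\le K_z e^{-\beta_z t}$ and $\sigma_p<\beta_z$ shows the $Z$-component satisfies $z(t)=-\int_t^\infty e^{(t-s)C}H(u(s))\,ds$. Putting the three pieces together, $u(\cdot,u_0)=\mathcal{T}(u(\cdot,u_0),y_0)$.

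For the reverse inclusion $\operatorname{graph}\Phi\subset\mathcal{M}_c$, suppose $u_0=x_0+y_0+z_0$ with $\Phi(y_0)=x_0+z_0$. By definition of $\Phi$, this means $\phi^*(0,y_0)=x_0+y_0+z_0=u_0$. By Proposition \ref{ufp}, $\phi^*(\cdot,y_0)$ is the unique solution of \eqref{bigsystem} with initial condition $u_0$, so $u(\cdot,u_0)=\phi^*(\cdot,y_0)\in\mathcal{F}_\sigma$, which is exactly the defining condition for $u_0\in\mathcal{M}_c$.

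The main obstacle is the first direction, specifically the step that turns the standard variation-of-constants identity on $X$ (and $Z$) into the Lyapunov-Perron form with limits at $\mp\infty$. This requires extracting the correct "asymptotic initial data" for the stable and unstable pieces of a bounded trajectory, which is a direct but delicate use of the trichotomy growth rates balanced against the $\sigma$-bound; the backward-growth argument that forces the correction term to vanish relies essentially on C\ref{c1}, and is where the spectral gap first actively controls the geometry of the manifold.
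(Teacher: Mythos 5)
Your proposal is correct, and you are also right that the displayed formula should read $\Phi(y_0)=x_0+z_0$ rather than $x_0+y_0$, since $\Phi$ maps into $X\times Z$. The comparison with the paper is lopsided: the paper's entire proof of Proposition \ref{uiscenter} is the sentence ``This is a direct result of Proposition \ref{ufp},'' and the proof of Proposition \ref{ufp} itself only verifies one direction, namely that the fixed point $\phi^*(\cdot,y_0)$ of $\mathcal{T}(\cdot,y_0)$ solves \eqref{bigsystem}; the assertion that it is the \emph{unique} element of $\mathcal{F}_\sigma$ solving the system --- equivalently, that every trajectory lying in $\mathcal{F}_\sigma$ is a fixed point of $\mathcal{T}$ --- is stated there but never argued. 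That missing half is exactly the inclusion $\mathcal{M}_c\subset\operatorname{graph}\Phi$, and your variation-of-constants argument supplies it: rewriting $x(t)=e^{tA}x_0+\int_0^t e^{(t-s)A}F(u(s))\,ds$ in Lyapunov--Perron form and showing that the correction term $e^{tA}\bigl(x_0-\int_{-\infty}^0 e^{-sA}F(u(s))\,ds\bigr)$ must vanish because otherwise $e^{-\sigma_n t}\|e^{tA}w\|\geq K_x^{-1}\|w\|e^{(\alpha_x-\sigma_n)t}\to\infty$ as $t\to-\infty$ (the lower bound coming from $\|w\|\leq\|e^{-tA}\|\,\|e^{tA}w\|$ and A\ref{assumption: exponential trichotomy}), which contradicts $u\in\mathcal{F}_\sigma$; the mirror argument on the $Z$-component uses $\sigma_p<\beta_z$ from C\ref{c1}. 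Your reverse inclusion is the genuinely ``direct'' consequence of Proposition \ref{ufp}, combined with uniqueness of solutions to \eqref{bigsystem} guaranteed by A\ref{assumption: nonlinear lipschitz}. The one caveat, which you inherit from the paper rather than introduce, is that the absolute convergence of $\int_{-\infty}^0 e^{-sA}F(u(s))\,ds$ uses the bound $\|F(u(s))\|\leq\delta_x\|u\|_\sigma e^{\sigma(s)s}$, which tacitly assumes $F(0)=0$ (and likewise $G(0)=H(0)=0$), exactly as in the proof of Proposition \ref{prop: well-defined}.
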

\begin{proof}
	This is a direct result of Proposition \ref{ufp}.
\end{proof}

In the next proof, we use Gronwall's inequality, a proof of which can be found in \cite{gronwall}:
\begin{lemma} {Gronwall's Inequality:}
	If $u(t) \leq p(t) + \int_{t_0}^t q(s) u(s) ds$ for functions $u, p$, and $q$ such that $u$ and $q$ are continuous and $p$ is non-decreasing, then
	\begin{align}\label{gronwall}
	u(t) \leq p(t)\exp\bigg(\int_{t_0}^t q(s)ds \bigg).
	\end{align}
\end{lemma}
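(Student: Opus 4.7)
The plan is to reduce the integral inequality to a first-order linear differential inequality and solve it with the standard integrating factor. First I would introduce the auxiliary function $v(t) := \int_{t_0}^t q(s) u(s)\, ds$, so that the hypothesis becomes $u(t) \leq p(t) + v(t)$ with $v(t_0) = 0$. Since $u$ and $q$ are continuous, $v$ is $\mathcal{C}^1$ and differentiation gives $v'(t) = q(t) u(t) \leq q(t)[p(t) + v(t)]$, which rearranges to the linear differential inequality $v'(t) - q(t) v(t) \leq q(t) p(t)$.

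Next I would multiply through by the integrating factor $\mu(t) := \exp\!\bigl(-\int_{t_0}^t q(r)\, dr\bigr)$, so that the left-hand side collapses to $\frac{d}{dt}[\mu(t) v(t)]$. Integrating from $t_0$ to $t$ and using $v(t_0) = 0$ yields
\[
\mu(t) v(t) \;\leq\; \int_{t_0}^t q(s) p(s) \mu(s)\, ds.
\]
Using the monotonicity of $p$, I would pull $p(t)$ outside the integral, bounding the right-hand side by $p(t) \int_{t_0}^t q(s) \mu(s)\, ds$. Because $q(s)\mu(s) = -\mu'(s)$, this remaining integral telescopes to $\mu(t_0) - \mu(t) = 1 - \mu(t)$, so $v(t) \leq p(t)[\mu(t)^{-1} - 1]$. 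Adding $p(t)$ and substituting back into the hypothesis gives $u(t) \leq p(t) + v(t) \leq p(t)\mu(t)^{-1} = p(t)\exp\!\bigl(\int_{t_0}^t q(s)\, ds\bigr)$, which is the claim.

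The only subtle point, and what I expect to be the main obstacle, is that both the monotonicity step (replacing $p(s)$ by $p(t)$ under the integral) and the telescoping identity $q\mu = -\mu'$ implicitly rely on $q(s) \geq 0$; without this sign condition the conclusion as stated fails in general. I would therefore either strengthen the hypotheses to include $q \geq 0$ explicitly, or, following the cited reference, understand nonnegativity of $q$ as a tacit assumption. With that minor clarification, the proof is essentially a textbook argument, and no deeper machinery than the fundamental theorem of calculus is required.
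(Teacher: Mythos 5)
The paper does not actually prove this lemma---it is quoted with a pointer to the reference \cite{gronwall}---so there is no in-paper argument to compare against; your proof supplies what the paper omits. Your integrating-factor argument is the standard one and is correct: setting $v(t)=\int_{t_0}^t q(s)u(s)\,ds$, differentiating, multiplying by $\mu(t)=\exp\bigl(-\int_{t_0}^t q\bigr)$, integrating, and using the monotonicity of $p$ together with $q\mu=-\mu'$ all go through exactly as you describe. Your flagged caveat is also genuinely right and worth stating: both the domination $q(s)p(s)\mu(s)\leq q(s)p(t)\mu(s)$ and the preservation of the inequality under integration require $q\geq 0$, a hypothesis the lemma as printed omits but which is standard in every formulation of Gronwall's inequality (and which holds in the paper's applications, where $q$ is a product of positive constants and exponentials). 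The only additional tacit assumption you did not name explicitly is the orientation $t\geq t_0$, needed so that integrating the differential inequality from $t_0$ to $t$ preserves its direction; for $t<t_0$ the conclusion takes the mirrored form with $\int_t^{t_0}$. With those two conventions understood, your proof is complete and elementary, using nothing beyond the fundamental theorem of calculus.
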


\begin{proposition}\label{prop: center lipschitz}
	Given A\ref{assumption: gap condition} and C\ref{c2}, fix $y_1$ and $y_2 \in Y$. Then, $\mathcal{M}_c$ is Lipschitz continuous with Lipschitz constant $K_ye^{\delta_\phi}$: $$\|\Phi(y_1) - \Phi(y_2)\| \leq K_ye^{\delta_\phi} \|y_1 - y_2\|.$$ 
\end{proposition}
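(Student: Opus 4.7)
The plan is to combine the fixed-point characterization of $\phi^*(t, y_0)$ with Gronwall's inequality \eqref{gronwall}. Let $\phi_i^* := \phi^*(\cdot, y_i) \in \mathcal{F}_\sigma$ denote the fixed point of $\mathcal{T}(\cdot, y_i)$ for $i = 1, 2$, and abbreviate $\Delta(t) := \|\phi_1^*(t) - \phi_2^*(t)\|$ and $D := \|y_1 - y_2\|$. By Proposition \ref{ufp} the $Y$-component of $\phi_i^*(0)$ is $y_i$ and the $X \times Z$-component is $\Phi(y_i)$, so $\|\Phi(y_1) - \Phi(y_2)\| \leq \Delta(0)$, and it suffices to show $\Delta(0) \leq K_y e^{\delta_\phi} D$.

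First I would derive a pointwise integral inequality for $\Delta(t)$ by writing $\phi_1^*(t) - \phi_2^*(t) = \mathcal{T}(\phi_1^*, y_1)(t) - \mathcal{T}(\phi_2^*, y_2)(t)$, expanding component by component, and applying A\ref{assumption: exponential trichotomy} and A\ref{assumption: nonlinear lipschitz}. This produces a linear term $\|e^{tB}(y_1 - y_2)\| \leq K_y e^{c(t)t} D$ in the $Y$-component together with three nonlinear integrals: an $X$ integral over $(-\infty, t)$, a $Y$ integral over $[0, t]$, and a $Z$ integral over $(t, \infty)$, each involving $\Delta(s)$. For the unbounded-interval $X$ and $Z$ pieces I would invoke the a priori $\sigma$-norm bound $\Delta(s) \leq \|\phi_1^* - \phi_2^*\|_\sigma e^{\sigma(s)s}$ (which holds since $\phi_1^* - \phi_2^* \in \mathcal{F}_\sigma$) and Lemma \ref{lemma1} to collapse them into contributions proportional to $\delta_\phi\|\phi_1^*-\phi_2^*\|_\sigma e^{\sigma(t)t}$, leaving only the $Y$ integral $K_y \delta_y \int_0^t e^{c(t)(t-s)}\Delta(s)\,ds$ in standard Gronwall form.

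Next I would apply Gronwall's inequality \eqref{gronwall} to $e^{-c(t)t}\Delta(t)$, treating $t \geq 0$ and $t \leq 0$ separately (reversing time in the latter case so that the $Y$ integral runs in the positive direction). The non-decreasing pre-factor is $K_y D$ and the Gronwall kernel is arranged so that its running total is bounded by $\delta_\phi$, exactly as computed in Lemma \ref{lemma1} case \eqref{case1}; this yields $\Delta(t) \leq K_y e^{c(t)t + \delta_\phi} D$. Setting $t = 0$ collapses the $c(t)t$ factor and gives $\Delta(0) \leq K_y e^{\delta_\phi} D$, proving the proposition.

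The hardest part will be the bookkeeping between the three integrals: only the $Y$ integral lies naturally in Gronwall form, so the argument has to suppress the $X$ and $Z$ contributions via the $\sigma$-norm bound from Proposition \ref{lipcont} before Gronwall can be invoked, and the resulting estimate is self-referential in $\|\phi_1^* - \phi_2^*\|_\sigma$. The fact that A\ref{assumption: gap condition} guarantees $\delta_\phi < 1$ (Lemma \ref{lemma1}) is what allows the self-reference to close, and consistent tracking of the sign conventions $c(t) \in \{\alpha_y, \beta_y\}$ versus $\sigma(t) \in \{\sigma_n, \sigma_p\}$ is what ensures the Gronwall exponent comes out to exactly $\delta_\phi$ rather than a strictly larger constant.
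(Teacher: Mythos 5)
Your setup---reducing to $\|\phi_1^*-\phi_2^*\|_\sigma$ via $\|\Phi(y_1)-\Phi(y_2)\|\le\|\phi_1^*(0)-\phi_2^*(0)\|$ and then estimating the difference of the two fixed-point equations component by component---matches the paper, but your handling of the three integrals diverges in a way that leaves a genuine gap. The paper applies Gronwall's inequality \eqref{gronwall} to \emph{each} of the three component inequalities, with $e^{-\sigma(s)s}\|\phi_1^*(s)-\phi_2^*(s)\|$ as the Gronwall unknown and $K_ye^{(c(t)-\sigma(t))t}\|y_1-y_2\|$ as the non-decreasing prefactor in every component; Lemma \ref{lemma1} bounds each exponent by $\delta_\phi$, so the bound $\|\phi_1^*-\phi_2^*\|_\sigma\le K_ye^{\delta_\phi}\|y_1-y_2\|$ comes out directly, with no self-reference to close. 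You instead absorb the $X$ and $Z$ integrals into $\delta_\phi M e^{\sigma(t)t}$, where $M:=\|\phi_1^*-\phi_2^*\|_\sigma$, and apply Gronwall only to the $Y$ integral. Two problems follow. First, your stated Gronwall prefactor $K_yD$ (with $D:=\|y_1-y_2\|$) silently discards the $\delta_\phi M e^{\sigma(t)t}$ contribution; it cannot be discarded, because the integrand of the $Y$ integral is the \emph{max} $\|\phi_1^*(s)-\phi_2^*(s)\|$ over all three components, so any Gronwall bound on that quantity must carry the $X$ and $Z$ pieces in its prefactor. Second, once that term is carried along, Gronwall yields $M\le(K_yD+\delta_\phi M)e^{\delta_\phi}$, and closing this self-reference requires $\delta_\phi e^{\delta_\phi}<1$, i.e.\ $\delta_\phi$ below the Omega constant $\approx 0.567$---strictly stronger than the $\delta_\phi<1$ guaranteed by Lemma \ref{lemma1}, and not implied by A\ref{assumption: gap condition} or C\ref{c2}. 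Even when it does close, the resulting constant is $K_ye^{\delta_\phi}/(1-\delta_\phi e^{\delta_\phi})$, not the claimed $K_ye^{\delta_\phi}$.

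If you want to avoid Gronwall on the unbounded intervals, the clean alternative is to apply the a priori $\sigma$-norm bound to \emph{all three} integrals exactly as in Proposition \ref{lipcont}; that gives $M\le K_yD+\delta_\phi M$, hence $M\le K_yD/(1-\delta_\phi)$, with no Gronwall and no self-reference problem---but since $1/(1-\delta_\phi)>e^{\delta_\phi}$ for $0<\delta_\phi<1$, this proves a Lipschitz bound with a strictly weaker constant than the one stated in the proposition. To obtain the stated constant $K_ye^{\delta_\phi}$ you need the paper's route: Gronwall in every component, with Lemma \ref{lemma1} controlling each exponent by $\delta_\phi$ and the factor $e^{(c(t)-\sigma(t))t}\le 1$ absorbing the remaining $t$-dependence.
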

\begin{proof}
	Let $\phi_1^*(t)=T(\phi_1^*(t),y_1)$ and $\phi_2^*(t) = T(\phi_2^*(t),y_2)$ and note that by Proposition \ref{uiscenter}:
	\begin{align}
	\|\Phi(y_1) - \Phi(y_2)\| &= \|\phi_1^*(0)|_{X \times Z} - \phi_2^*(0)|_{X \times Z}\|  \leq \|\phi_1^* - \phi_2^*\|_\sigma.
	\end{align}
	This implies that we get a bound for $\|\Phi(y_1) - \Phi(y_2)\|$ if we bound $\|\phi_1^* - \phi_2^*\|_\sigma$. We calculate the difference using the equivalence to the $\mathcal{T}$ map. First notice that 
	\begin{align}
	e^{-\sigma(t)}\|\phi_1^*(t) - \phi_2^*(t)\|  \leq 	\max\bigg\{K_x\delta_x e^{(\alpha_x-\sigma(t)) t} \int_{-\infty}^t e^{(\sigma(s)-\alpha_x)s}e^{-\sigma(s) s} \|\phi_1^*(s) - \phi_2^*(s)\|ds, \\ K_y\delta_y e^{(c(t)-\sigma(t)) t}\int_0^t e^{(\sigma(s)-c(t))s}e^{-\sigma(s) s} \|\phi_1^*(s) - \phi_2^*(s)\|ds, \\
	K_z\delta_z e^{(\beta_z-\sigma(t)) t} \int_t^\infty e^{(\sigma(s)-\beta_z)s}e^{-\sigma(s) s} \|\phi_1^*(s) - \phi_2^*(s)\|ds\bigg\} + K_ye^{(c(t)-\sigma(t)) t}\|y_1 - y_2\|. 
	\end{align}
	We use \eqref{gronwall} in each component to get
	\begin{align}
	e^{-\sigma t}\|\phi_1^*(t) - \phi_2^*(t)\| \leq K_y\|y_1 - y_2\|\max\bigg\{	e^{(c(t)-\sigma(t)) t}\exp(K_x\delta_x e^{(\alpha_x-\sigma(t)) t} \int_{-\infty}^t e^{(\sigma(s)-\alpha_x)s}ds), \\ e^{(c(t)-\sigma(t)) t}\exp(K_y\delta_y e^{(c(t)-\sigma(t)) t}\int_0^t e^{(\sigma(s)-c(t))s}ds), \\
	e^{(c(t)-\sigma(t)) t}\exp(K_z\delta_z e^{(\beta_z-\sigma(t)) t} \int_t^\infty e^{(\sigma(s)-\beta_z)s}ds)\bigg\}. 
	\end{align}
	By Lemma \ref{lemma1}, this simplifies to $\|\phi_1^* - \phi_2^*\|_\sigma \leq K_ye^{\delta_\phi}\|y_1 - y_2\|.$
	
\end{proof}
Now that we have that the manifold is Lipschitz, we have completed the final step in proving the main theorem of this section:

\begin{theorem}\label{thm1}
	Given Given A\ref{assumption: exponential trichotomy}, A\ref{assumption: nonlinear lipschitz}, A\ref{assumption: gap condition}, C\ref{c1}, and C\ref{c2} there exists a unique Lipschitz map $\Phi:Y\rightarrow X \times Z$ such that $\mbox{graph}(\Phi)$ is the center manifold of \eqref{bigsystem}.
\end{theorem}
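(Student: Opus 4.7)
The plan is to assemble the theorem as a direct corollary of the preceding propositions, which together have already done essentially all the work. First I would construct the map: for each $y_0 \in Y$, Proposition \ref{lipcont} shows that $\mathcal{T}(\cdot,y_0)$ is a contraction on the Banach space $\mathcal{F}_\sigma$ with Lipschitz constant $\delta_\phi<1$, and Proposition \ref{prop: well-defined} guarantees that $\mathcal{T}$ maps $\mathcal{F}_\sigma$ into itself. The Banach Fixed Point Theorem therefore produces a unique $\phi^*(\cdot,y_0)\in\mathcal{F}_\sigma$ satisfying $\phi^*(t,y_0)=\mathcal{T}(\phi^*(t,y_0),y_0)$, and I define $\Phi:Y\to X\times Z$ by $\Phi(y_0)=\phi^*(0,y_0)|_{X\times Z}$.

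Next I would invoke Proposition \ref{uiscenter} to identify $\text{graph}(\Phi)$ with the center manifold $\mathcal{M}_c$, and Proposition \ref{prop: center lipschitz} to conclude that $\Phi$ is Lipschitz continuous with constant $K_y e^{\delta_\phi}$. This verifies the existence portion of the statement, i.e., that $\Phi$ is a Lipschitz map whose graph coincides with $\mathcal{M}_c$.

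The only remaining point is the uniqueness of $\Phi$. Suppose $\Psi:Y\to X\times Z$ is any Lipschitz map whose graph equals $\mathcal{M}_c$. Fix $y_0\in Y$ and set $u_0=\Psi(y_0)+y_0\in\mathcal{M}_c$. By the definition of $\mathcal{M}_c$, the trajectory $t\mapsto u(t,u_0)$ lies in $\mathcal{F}_\sigma$ and solves \eqref{bigsystem} with $y$-component starting at $y_0$. Proposition \ref{ufp} asserts that the unique such element of $\mathcal{F}_\sigma$ is $\phi^*(\cdot,y_0)$; evaluating at $t=0$ and projecting onto $X\times Z$ gives $\Psi(y_0)=\phi^*(0,y_0)|_{X\times Z}=\Phi(y_0)$, so $\Psi=\Phi$.

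There is no real obstacle here since all of the analytic content, namely the contraction estimate driven by Lemma \ref{lemma1}, the characterization of the fixed point as a solution to \eqref{bigsystem} in $\mathcal{F}_\sigma$, and the Lipschitz bound, has been carried out in the earlier results. The proof is therefore essentially bookkeeping: the only point that requires a brief argument rather than a pure citation is the uniqueness of $\Phi$, and this follows from the uniqueness of solutions to \eqref{bigsystem} inside $\mathcal{F}_\sigma$ established in Proposition \ref{ufp}.
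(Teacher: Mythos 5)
Your proposal matches the paper's approach exactly: the paper gives no separate proof of Theorem \ref{thm1}, treating it as the immediate assembly of Propositions \ref{prop: well-defined}, \ref{lipcont}, \ref{ufp}, \ref{uiscenter}, and \ref{prop: center lipschitz} via the Banach Fixed Point Theorem, which is precisely your bookkeeping. Your explicit uniqueness argument for $\Phi$ (reducing it to the uniqueness of the fixed point in $\mathcal{F}_\sigma$ from Proposition \ref{ufp}) is a small but welcome addition that the paper leaves implicit.
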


\section{Regularity of the Manifold}\label{sec:regularity of the manifold}

In this section, we will show that the map $\Phi \in \mathcal{C}^1(Y, X\times Z)$.  To do this, we need additional assumptions regarding the nonlinear terms:
\begin{assumption}\label{assumption: nonlinear c1}
	$F(x,y,z) \in \mathcal{C}^1(E,X)$, $G(x,y,z) \in \mathcal{C}^1(E,Y)$, and $H(x,y,z) \in \mathcal{C}^1(E,Z)$.
\end{assumption}
\begin{assumption}\label{assumption: nonlinear lip}
    For $u_1, u_2 \in E$,
    \begin{align}
    \|DF(u_1) -DF(u_2)\| &\leq \gamma_x\|u_1 -u_2\| \\
    \|DG(u_1) -DG(u_2)\| &\leq \gamma_y\|u_1 - u_2\| \\
    \|DH(u_1) -DH(u_2)\| &\leq \gamma_z\|u_1 -u_2\|,
    \end{align}
    where $\gamma_x, \gamma_y$, and $\gamma_z \in \mathbb{R}_{> 0}$.
\end{assumption}
\noindent 	

With A\ref{assumption: nonlinear lipschitz} and a theorem from \cite{federer}, we know that the norm of the derivative of each nonlinear term is uniformly bounded: for example, given A\ref{assumption: nonlinear lipschitz} and A\ref{assumption: nonlinear c1}, $\|DF(x,y,z)\| \leq \delta_x$. By Proposition \ref{ufp}, $\phi^*(t,y_0)= \mathcal{T}(\phi^*(t),y_0)$ for fixed $y_0$.  Denote this by $\phi_0(t) := \phi^*(t,y_0)$ when $y_0$ is understood. 
We study the derivative of the $\phi$ map using the Banach Fixed Point Theorem. We define the space
\begin{align}
{F}_{1,\sigma} = \big\{\Delta \in C(\mathbb{R},\mathcal{L}(Y,E)): \smash{\displaystyle\sup_{t \in \mathbb{R}}} \mbox{ } e^{-\sigma(t)} \|\Delta(t)\|_{\mathcal{L}(Y,E)} = \|\Delta\|_{1,\sigma} < \infty \big\},
\end{align}
where $\sigma(t)$ is defined as before in \eqref{eq:sigma}. We differentiate $\phi_0 = \mathcal{T}(\phi_0)$ with respect to $y$ to get $\mathcal{T}_1: \mathcal{F}_{1,\sigma} \times Y \rightarrow \mathcal{F}_{1,\sigma}$	where
\begin{align}\label{t1}
\mathcal{T}_1(\Delta(t),y_0) = \underbrace{e^{tB} + \int_0^t e^{(t-s)B}DG(\phi_0(s))\Delta(s)ds}_\text{$Y$}
- \underbrace{\int_{-\infty}^t e^{(t-s)C}DH(\phi_0(s))\Delta(s) ds}_\text{$Z$} \nonumber \\ 
+ \underbrace{\int_t^\infty e^{(t-s)A}DF(\phi_0(s))\Delta(s) ds}_\text{$X$}.
\end{align}

As in the previous section, we show that $\mathcal{T}_1$ is well-defined. This proof is similar to the proof for $\mathcal{T}$ and thus we leave out several details.

\begin{proposition}
	Given A\ref{assumption: exponential trichotomy}, A\ref{assumption: nonlinear lipschitz}, and C\ref{c1}, %and \eqref{eq: der bounds}, 
	$\mathcal{T}_1(\Delta(t),y_0)$ is well-defined.
\end{proposition}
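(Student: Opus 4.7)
The plan is to mirror Proposition \ref{prop: well-defined} almost verbatim, replacing the pointwise norm on $E$ with the operator norm on $\mathcal{L}(Y,E)$ and replacing the nonlinearities $F,G,H$ with the compositions $DF(\phi_0)\Delta$, $DG(\phi_0)\Delta$, $DH(\phi_0)\Delta$. Two things must be verified: that $\|\mathcal{T}_1(\Delta,y_0)\|_{1,\sigma}<\infty$, and that $\mathcal{T}_1(\Delta(\cdot),y_0)\in\mathcal{C}(\mathbb{R},\mathcal{L}(Y,E))$.

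For the norm bound, I would take $\|\cdot\|_{\mathcal{L}(Y,E)}$ of \eqref{t1}, distribute across each product via operator-norm submultiplicativity, apply A\ref{assumption: exponential trichotomy} to the semigroup factors, and invoke the uniform bounds $\|DF\|\leq\delta_x$, $\|DG\|\leq\delta_y$, $\|DH\|\leq\delta_z$ (a consequence of A\ref{assumption: nonlinear lipschitz} together with A\ref{assumption: nonlinear c1}, as noted in the paragraph after A\ref{assumption: nonlinear lip}). Inserting $e^{\sigma(s)s}e^{-\sigma(s)s}=1$ into each integrand, multiplying both sides by $e^{-\sigma(t)t}$, and pulling $\|\Delta\|_{1,\sigma}$ out of the integrals, the homogeneous term $e^{tB}$ contributes $K_y e^{(c(t)-\sigma(t))t}$, whose supremum over $t\in\mathbb{R}$ equals $K_y$ by the sign analysis \eqref{tleq0}--\eqref{tgeq0}; the three integral terms match exactly the three expressions evaluated in Lemma \ref{lemma1}, so each is bounded by $\delta_\phi\|\Delta\|_{1,\sigma}$. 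Combining yields $\|\mathcal{T}_1(\Delta,y_0)\|_{1,\sigma}\leq K_y+\delta_\phi\|\Delta\|_{1,\sigma}<\infty$.

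For continuity in $t$, I would adapt the six-case decomposition of Proposition \ref{prop: well-defined}. Fix $t\in\mathbb{R}$, restrict $d$ to a ball in which $\mathrm{sgn}(d)=\mathrm{sgn}(t)$, and split $\mathcal{T}_1(\Delta(t),y_0)-\mathcal{T}_1(\Delta(d),y_0)$ into three families of terms: factors of the form $e^{tB}-e^{dB}$ or $I-e^{(d-t)L}$ for $L\in\{A,B,C\}$ multiplying indefinite integrals whose operator norm is already controlled by the first part of the proof; integrals over the shrinking window $[t,d]$ with integrand bounded uniformly on a neighborhood of $t$; and tail sums $\sum_{n\geq 1}((d-t)L)^n/n!$ multiplied by bounded indefinite integrals. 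All three families vanish as $d\to t$ by power-series expansion of the semigroups and local absolute continuity of the integrals, exactly as in the original proof.

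The main obstacle, really just bookkeeping, is tracking the operator-norm submultiplicativity $\|AB\|_{\mathcal{L}(Y,E)}\leq\|A\|_{\mathcal{L}(E,E)}\|B\|_{\mathcal{L}(Y,E)}$ when distributing norms across triples such as $e^{(t-s)B}DG(\phi_0(s))\Delta(s)$, and verifying once that $s\mapsto DG(\phi_0(s))\Delta(s)$ is strongly measurable and Bochner-integrable against the exponential weight so that the integrals in \eqref{t1} are well-defined in $\mathcal{L}(Y,E)$. Modulo these routine checks, the argument is structurally identical to Proposition \ref{prop: well-defined}, which is presumably why the authors elect to omit several details.
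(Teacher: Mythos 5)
Your proposal is correct and follows essentially the same route as the paper: bound $e^{-\sigma(t)t}\|\mathcal{T}_1(\Delta(t),y_0)\|$ by the three weighted integrals using A\ref{assumption: exponential trichotomy} and the uniform bounds $\|DF\|\leq\delta_x$, $\|DG\|\leq\delta_y$, $\|DH\|\leq\delta_z$, then invoke Lemma \ref{lemma1} for finiteness and repeat the six-case semigroup decomposition of Proposition \ref{prop: well-defined} for continuity in $t$. The paper simply compresses all of this into a reference back to Proposition \ref{prop: well-defined}; your only addition is to make explicit the operator-norm submultiplicativity and integrability checks, which the paper leaves implicit.
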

\begin{proof}
	First, we show $\mathcal{T}_1: \mathcal{F}_{1,\sigma} \times Y \rightarrow \mathcal{F}_{1,\sigma}$ by showing that $\|\mathcal{T}_1(\Delta,y_0)\|_{1,\sigma} < \infty$ for any $\Delta \in \mathcal{F}_{1, \sigma}$ and $y_0 \in Y$. We obtain the form
	\begin{equation}\label{intermediate}
	\begin{aligned} 
	e^{-\sigma(t) t}\|\mathcal{T}_1(\Delta(t),y_0)\| \leq \max&\bigg\{K_x\delta_x e^{(\alpha_x-\sigma(t)) t}\int_{-\infty}^t e^{(\sigma(s)-\alpha_x) s}e^{-\sigma(s) s}\|\Delta(s)\| ds, \\
	K_y e^{(c(t)-\sigma(t)) t} + &K_y\delta_y e^{(c(t)-\sigma(t)) t}\int_0^t e^{(\sigma(s)-c(t)) s}e^{\sigma(s) s}\|\Delta(s)\| ds, \\
	&K_z\delta_z e^{(\beta_z-\sigma(t)) t}\int_t^\infty e^{(\sigma(s)-\beta_z) s}e^{-\sigma(s) s}\|\Delta(s)\| ds\bigg\}.
	\end{aligned} 
	\end{equation}
	From Proposition \ref{prop: well-defined}, \eqref{intermediate} will simplify down to a set of finite constants. Showing that $\mathcal{T}_1(\Delta(t),y_0)$ is continuous in $t$ also follows Proposition \ref{prop: well-defined}.
\end{proof}

The next step is to show that $\mathcal{T}_1(\cdot,y_0)$ is a contraction mapping.

\begin{proposition}
	Given Given A\ref{assumption: exponential trichotomy} and A\ref{assumption: nonlinear lipschitz}, 
	$\mathcal{T}_1(\Delta(t),y_0)$ is a contraction mapping with rate $\delta_\phi.$
\end{proposition}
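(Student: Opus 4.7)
The plan is to imitate the proof of Proposition \ref{lipcont} almost verbatim, exploiting the fact that the linear structure of $\mathcal{T}_1$ is identical to that of $\mathcal{T}$, with $DG(\phi_0(s))\Delta(s)$, $DH(\phi_0(s))\Delta(s)$, and $DF(\phi_0(s))\Delta(s)$ in place of $G(\phi(s))$, $H(\phi(s))$, and $F(\phi(s))$. Fix $y_0 \in Y$ and take $\Delta_1, \Delta_2 \in \mathcal{F}_{1,\sigma}$. The first observation is that the inhomogeneous term $e^{tB}$ in the $Y$ component of \eqref{t1} is independent of $\Delta$ and cancels in the difference $\mathcal{T}_1(\Delta_1(t),y_0) - \mathcal{T}_1(\Delta_2(t),y_0)$, so only the three integral terms survive.

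Next, I would apply A\ref{assumption: exponential trichotomy} to bound each semigroup and use the uniform derivative bounds $\|DF\|\le \delta_x$, $\|DG\|\le \delta_y$, $\|DH\|\le\delta_z$ noted just before the definition of $\mathcal{F}_{1,\sigma}$ (which follow from A\ref{assumption: nonlinear lipschitz}, A\ref{assumption: nonlinear c1}, and the Federer theorem). This yields, taking the $E$-norm componentwise,
\begin{align*}
\|\mathcal{T}_1(\Delta_1(t)) - \mathcal{T}_1(\Delta_2(t))\| \le \max\Bigl\{&K_x\delta_x\!\int_{-\infty}^{t}\!e^{(t-s)\alpha_x}\|\Delta_1(s)-\Delta_2(s)\|\,ds,\\
&K_y\delta_y\!\int_{0}^{t}\!e^{(t-s)c(t)}\|\Delta_1(s)-\Delta_2(s)\|\,ds,\\
&K_z\delta_z\!\int_{t}^{\infty}\!e^{(t-s)\beta_z}\|\Delta_1(s)-\Delta_2(s)\|\,ds\Bigr\}.
\end{align*}

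I would then multiply by $e^{-\sigma(t)t}$, insert the factor $e^{\sigma(s)s}e^{-\sigma(s)s}$ inside each integrand, and pull the supremum $\|\Delta_1-\Delta_2\|_{1,\sigma} = \sup_s e^{-\sigma(s)s}\|\Delta_1(s)-\Delta_2(s)\|$ outside. Taking the supremum over $t \in \mathbb{R}$ of what remains produces exactly the three quantities \eqref{case3}, \eqref{case1}, \eqref{case2} from Lemma \ref{lemma1}, so the whole expression is bounded by $\delta_\phi\|\Delta_1 - \Delta_2\|_{1,\sigma}$. Since Lemma \ref{lemma1} guarantees $\delta_\phi < 1$, this gives the contraction.

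The calculation is essentially routine once Proposition \ref{lipcont} is in hand; the only point that needs a bit of care is justifying that the cancellation of $e^{tB}$ really leaves us in a setting where the Lipschitz constants $\delta_x,\delta_y,\delta_z$ (and not the new constants $\gamma_x,\gamma_y,\gamma_z$ from A\ref{assumption: nonlinear lip}, which will only be needed later for $\mathcal{C}^1$ regularity) are the right quantities to use, and this is precisely what the uniform derivative bound $\|DG(\phi_0(s))\| \le \delta_y$ etc.\ provides. No additional structural obstacle arises: the rate of contraction for $\mathcal{T}_1$ is the same $\delta_\phi$ as for $\mathcal{T}$.
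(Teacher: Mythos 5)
Your proposal is correct and follows essentially the same route as the paper: cancel the $\Delta$-independent term $e^{tB}$, bound the three integrals using A1 together with the uniform derivative bounds $\|DF\|\le\delta_x$, $\|DG\|\le\delta_y$, $\|DH\|\le\delta_z$, weight by $e^{-\sigma(t)t}$, and invoke Lemma \ref{lemma1} to get the contraction rate $\delta_\phi<1$. (Your integration limits for the $X$ and $Z$ components match those of the original operator $\mathcal{T}$, which is the natural form; the paper's displayed estimate instead carries over the swapped limits appearing in \eqref{t1}, but this does not change the argument.)
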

\begin{proof}
	Take the difference $\mathcal{T}_1(\Delta_1(t),y_0) - \mathcal{T}_1(\Delta_2(t),y_0)$,
	where $\Delta_1, \Delta_2$ are arbitrary functions in $\mathcal{F}_{1,\sigma}$. Then, we take norms and apply A\ref{assumption: exponential trichotomy} and A\ref{assumption: nonlinear lipschitz}:
	\begin{align}
	e^{-\sigma(t) t}\|\mathcal{T}_1(\Delta_1(t),y_0) - \mathcal{T}_1(\Delta_2(t),y_0)\| \leq   \max\bigg\{K_x\delta_x e^{(\alpha_y - \sigma(t))t}\int_{t}^{\infty}e^{(\sigma(s)-\alpha_y)s}ds, \\ K_y\delta_y e^{(c(t) - \sigma(t))t}\int_{0}^{t}e^{(\sigma(s)-c(t))s}ds, K_z\delta_z e^{(\beta_z - \sigma(t))t}\int_{-\infty}^{t}e^{(\sigma(s)-\beta_z)s}ds\bigg\}\|\Delta_1-\Delta_2\|_{1,\sigma}.
	\end{align}
	The result follows by Lemma \ref{lemma1}.
	
\end{proof}

Let $\Delta^*(t):=\mathcal{T}_1(\Delta^*(t),y_0)$ be the fixed point of the $\mathcal{T}_1$ map given $y_0$. We need the following lemma:

\begin{lemma} \label{bound on phi}
	Given A\ref{assumption: exponential trichotomy}, A\ref{assumption: nonlinear lipschitz}, A\ref{assumption: gap condition}, C\ref{c1}, C\ref{c2}, then let $\phi_1(t):= \phi^*(t,y_1)$ and $\phi_2(t):= \phi^*(t,y_2)$. We have the following bound on $\|\phi_1(t) - \phi_2(t)\|$:
	\begin{align}
	\|\phi_1(t) - \phi_2(t)\| \leq \begin{cases}
	K_y e e^{(K_y \delta_y + \alpha_y)t}\|y_1 - y_2\| & \mbox{ when } t\geq 0 \\
	K_y e e^{(\beta_y - K_y \delta_y)t}\|y_1 - y_2\| & \mbox{ when } t\leq 0.
	\end{cases}
	\end{align}
\end{lemma}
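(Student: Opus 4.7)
The plan is to derive an integral inequality for $D(t) := \|\phi_1(t) - \phi_2(t)\|$ from the Lyapunov-Perron fixed point equation $\phi_i = \mathcal{T}(\phi_i, y_i)$ of Proposition \ref{ufp}, and then apply Gronwall's inequality. Taking the difference, using the max norm on $E$, and applying A\ref{assumption: exponential trichotomy} and A\ref{assumption: nonlinear lipschitz} component-wise yields, for $t \geq 0$, a bound on $D(t)$ by the homogeneous $Y$-contribution $K_y e^{\alpha_y t}\|y_1 - y_2\|$ plus the max of three integrals of $D(s)$ with kernels $K_x \delta_x e^{\alpha_x(t-s)}$ on $(-\infty,t)$, $K_y \delta_y e^{\alpha_y(t-s)}$ on $[0,t]$, and $K_z \delta_z e^{\beta_z(t-s)}$ on $(t,\infty)$. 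Bounding the max by the sum brings everything into a form amenable to Gronwall:
\begin{align*}
D(t) \leq K_y e^{\alpha_y t}\|y_1 - y_2\| &+ K_y \delta_y \int_0^t e^{\alpha_y(t-s)} D(s)\,ds \\
&+ K_x \delta_x \int_{-\infty}^t e^{\alpha_x(t-s)} D(s)\,ds + K_z \delta_z \int_t^\infty e^{\beta_z(t-s)} D(s)\,ds.
\end{align*}

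The main task is to fold the $X$- and $Z$-integrals into a standard Gronwall setup anchored by the $Y$-kernel. I would split $\int_{-\infty}^t = \int_{-\infty}^0 + \int_0^t$: the tail on $(-\infty,0]$ is controlled using Proposition \ref{prop: center lipschitz}, i.e., $D(s) \leq K_y e^{\delta_\phi}\|y_1 - y_2\|e^{\sigma_n s}$, together with Lemma \ref{lemma1}'s bound $K_x\delta_x/(\sigma_n - \alpha_x) < 1$, reducing it to a constant multiple of $\|y_1 - y_2\|$; on $[0,t]$ the estimate $e^{\alpha_x(t-s)} \leq e^{\alpha_y(t-s)}$ (valid since $\alpha_x < \alpha_y$ and $t-s \geq 0$) folds the integrand into the main Gronwall kernel. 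The $Z$-integral is handled symmetrically by iterating the target exponential rate and invoking $\beta_z - (\alpha_y + K_y\delta_y) > K_z\delta_z$ from A\ref{assumption: gap condition}, so that the $Z$-contribution remains self-consistent under the bound being proven, rather than blowing up at the looser $\sigma_p$ rate that would come from using Proposition \ref{prop: center lipschitz} naively.

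Setting $v(t) := e^{-\alpha_y t} D(t)$, the collected inequality reduces to $v(t) \leq K_y \theta \|y_1 - y_2\| + K_y\delta_y \int_0^t v(s)\,ds$ for a constant $\theta$, and Gronwall then gives $v(t) \leq K_y \theta \|y_1 - y_2\| e^{K_y\delta_y t}$, which is the stated bound once $\theta \leq e$ is verified. The $t \leq 0$ case runs symmetrically: $\alpha_y$ is replaced by $\beta_y$, the $Y$-integral spans $[t,0]$ with kernel $e^{\beta_y(t-s)}$, and the roles of the $X$ and $Z$ tails are swapped under the time-reversal. The principal obstacle will be verifying $\theta \leq e$: this should follow from the observation that each tail contribution is scaled by a factor $\leq \delta_\phi < 1$ from Lemma \ref{lemma1}, so the accumulated prefactor can be kept below $1 + \delta_\phi < e$; a continuity or bootstrap argument may be needed to legitimize the self-consistent treatment of the $Z$-integral, since the natural $\sigma$-norm estimate is too loose to close the Gronwall loop on its own.
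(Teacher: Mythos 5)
Your strategy --- re-deriving an integral inequality from the fixed-point equation and closing it with Gronwall at the endpoint rates --- is not what the paper does, and as sketched it does not reach the stated bound. The paper's proof is a one-step limiting argument: Proposition \ref{prop: center lipschitz} already gives $\|\phi_1(t)-\phi_2(t)\|\leq K_y e^{\delta_\phi}e^{\sigma(t)t}\|y_1-y_2\|$ for \emph{every} admissible pair $(\sigma_n,\sigma_p)$ satisfying C\ref{c1} and C\ref{c2}, with constant uniformly below $K_y e$ because $\delta_\phi<1$; since the left-hand side does not depend on $\sigma$, one lets $\sigma_p\downarrow \alpha_y+K_y\delta_y$ and $\sigma_n\uparrow \beta_y-K_y\delta_y$ pointwise in $t$ and is done. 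No new Gronwall argument is run at the endpoint rates, and that is deliberate, because the endpoint rates are exactly where the underlying contraction degenerates.

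The concrete gaps in your version are these. (i) Folding the $[0,t]$ portion of the $X$-integral into the Gronwall kernel via $e^{\alpha_x(t-s)}\leq e^{\alpha_y(t-s)}$ changes the Gronwall coefficient from $K_y\delta_y$ to $K_y\delta_y+K_x\delta_x$, so the method returns the rate $e^{(\alpha_y+K_y\delta_y+K_x\delta_x)t}$, strictly weaker than the claimed $e^{(\alpha_y+K_y\delta_y)t}$. (ii) The ``self-consistent'' insertion of the target bound into the $Z$-integral (and into the $X$-integral on $[0,t]$, if you use that to avoid (i)) is not a legitimate application of Gronwall: the lemma requires the inhomogeneous term $p(t)$ to be a known function, whereas here it would depend on the unknown $D$. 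Making this rigorous amounts to a fixed-point argument in the weight $e^{-(\alpha_y+K_y\delta_y)t}$, and there the $Y$-term contributes the factor $K_y\delta_y/(\sigma_p-\alpha_y)$, which equals exactly $1$ at $\sigma_p=\alpha_y+K_y\delta_y$; the contraction fails precisely at the endpoint, which is why the paper works at interior $\sigma$ and passes to the limit. (iii) The constant does not close either: the tail $K_x\delta_x\int_{-\infty}^0 e^{\alpha_x(t-s)}D(s)\,ds$, estimated via Proposition \ref{prop: center lipschitz} and Lemma \ref{lemma1}, contributes $\delta_\phi\, K_y e^{\delta_\phi}\|y_1-y_2\|$ to $p(t)$, so your $\theta$ is at least $1+\delta_\phi e^{\delta_\phi}$, which exceeds $e$ once $\delta_\phi$ is close to $1$; the claim that the prefactor stays below $1+\delta_\phi<e$ undercounts the factor $K_y e^{\delta_\phi}$ already sitting inside the a priori bound you are feeding into the tails. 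I recommend replacing the whole construction with the paper's observation that the $\sigma$-dependent estimate of Proposition \ref{prop: center lipschitz} can simply be optimized over the admissible range of $\sigma$.
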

\begin{proof}		
	From Proposition \ref{prop: center lipschitz}, we have that $$\|\phi_1(t) - \phi_2(t) \|e^{-\sigma_n t} \leq K_ye^{\delta_\phi} \|y_1 - y_2\|$$ where $K_ye^{\delta_\phi} = K_y \max\big\{e^{\frac{K_x \delta_x}{\sigma_n - \alpha_x}}, e^{\frac{K_y \delta_y}{\beta_y - \sigma_n}}, e^{\frac{K_z \delta_z}{\beta_z - \sigma_p}}\big\}< K_ye$ in the case that $t \leq 0$. So $$\|\phi_1(t) - \phi_2(t) \| \leq K_y ee^{\sigma_n t} \|y_1 - y_2\|.$$ From C\ref{c2} on $\sigma_n$, we have that $\alpha_x + K_x \delta_x < \sigma_n < \beta_y - K_y \delta_y$. If we multiply through by $t \leq 0$, we get that $(\beta_y - K_y \delta_y)t < \sigma_n t < (\alpha_x + K_x \delta_x) t$. We get the most precise bound on $\|\phi(t,y_1) - \phi(t,y_2) \|$ by letting $\sigma_n \rightarrow \beta_y - K_y \delta_y$. When $t \leq 0$, $$\|\phi_1(t) - \phi_2(t) \| \leq K_y e e^{(\beta_y - K_y \delta_y)t}\|y_1 - y_2\|.$$ 
	
	When $t \geq 0$, 
	\begin{align} 
	\|\phi_1(t) - \phi_2(t) \| \leq K_ye^{\delta_\phi} e^{\sigma_p t} \|y_1 - y_2\|.
	\end{align} 
	As before, from C\ref{c2}, we get that $(\alpha_y + K_y \delta_y)t < \sigma_p t < (\beta_z - K_z \delta_z)t$. Taking $\sigma_p \rightarrow (\alpha_y + K_y \delta_y)$ gives that $$\|\phi_1(t) - \phi_2(t)\| \leq K_y ee^{(K_y \delta_y + \alpha_y)t}\|y_1 - y_2\|.$$
	
	Continuing, we use the following shorthand:
	\begin{align}
	k(t) = \begin{cases}
	- K_y\delta_y & \mbox{ when } t \leq 0 \\ K_y\delta_y & \mbox{ when } t \geq 0,
	\end{cases}
	\end{align}
	and $v(t) = c(t) + k(t)$.
\end{proof}

We show next that $\Delta^* = \partial \phi_0/\partial y$. This follows a similar idea to the proof presented in \cite{castaneda}.

\begin{proposition}\label{prop:DPhi equivalence}
	Given A\ref{assumption: exponential trichotomy}, A\ref{assumption: nonlinear lipschitz}, A\ref{assumption: gap condition}, C\ref{c1}, and C\ref{c2} %, and \eqref{eq: der bounds}, 
	we have that $\partial \phi(y_0)/\partial y = \Delta^*$, where $\Delta^*(t):=\mathcal{T}_1(\Delta^*(t),y_0)$ and $\phi^*(t,y_0) = \mathcal{T}(\phi^*(t),y_0)$ for a given $y_0 \in Y$.
\end{proposition}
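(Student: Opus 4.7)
The plan is to follow the standard Lyapunov-Perron strategy by introducing the candidate error
\[
R(t) := \phi^\ast(t,y_1) - \phi^\ast(t,y_2) - \Delta^\ast(t,y_2)(y_1-y_2),
\]
and showing that $\|R\|_\sigma = O(\|y_1-y_2\|^2)$ as $y_1 \to y_2$. Since $\Delta^\ast(\cdot,y_2) \in \mathcal{F}_{1,\sigma}$ and $\phi^\ast(\cdot,y_i) \in \mathcal{F}_\sigma$, we have $R \in \mathcal{F}_\sigma$, and this bound would immediately yield Fr\'echet differentiability of the map $y \mapsto \phi^\ast(\cdot,y)$ at $y_2$ with derivative $\Delta^\ast(\cdot,y_2)$, which is precisely the claim.

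First I would subtract the two fixed-point identities $\phi^\ast(\cdot,y_i) = \mathcal{T}(\phi^\ast(\cdot,y_i),y_i)$ from one another, then subtract $\mathcal{T}_1(\Delta^\ast(\cdot,y_2),y_2)$ applied to the increment $y_1-y_2$; the free $e^{tB}y_i$ and $e^{tB}(y_1-y_2)$ terms cancel precisely. Inside each integral I would insert the first-order Taylor identity
\[
F(\phi_1(s)) - F(\phi_2(s)) - DF(\phi_2(s))(\phi_1(s)-\phi_2(s)) =: r_F(s),
\]
with the quadratic bound $\|r_F(s)\| \leq \tfrac{\gamma_x}{2}\|\phi_1(s)-\phi_2(s)\|^2$ from A\ref{assumption: nonlinear lip}, and analogously for $G$ and $H$. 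Rearranging produces an equation of the form $R(t) = Q(t) + L[R](t)$, where $Q$ collects the three integrated Taylor remainders $r_F,r_G,r_H$, and $L$ has exactly the structure of $\mathcal{T}_1$ but with $\phi_0$ specialized to $\phi_2$ and acting on the unknown $R$ in place of $\Delta$.

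Because $\|DF\|,\|DG\|,\|DH\|$ are uniformly bounded by $\delta_x,\delta_y,\delta_z$ (from A\ref{assumption: nonlinear lipschitz} via the result of \cite{federer} invoked just after A\ref{assumption: nonlinear lip}), the contraction estimate of Proposition~\ref{lipcont} applies verbatim to $L$ and gives $\|L[R]\|_\sigma \leq \delta_\phi \|R\|_\sigma$ with $\delta_\phi < 1$. Since $R \in \mathcal{F}_\sigma$, the operator $I-L$ is invertible there, yielding
\[
\|R\|_\sigma \leq \frac{\|Q\|_\sigma}{1-\delta_\phi}.
\]
The task reduces to bounding $\|Q\|_\sigma$ by a multiple of $\|y_1-y_2\|^2$. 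For this I would feed the sharp bound of Lemma~\ref{bound on phi}, $\|\phi_1(s)-\phi_2(s)\|^2 \leq (K_y e)^2 e^{2v(s)s}\|y_1-y_2\|^2$, into the integrated Taylor remainders and reproduce the three integral evaluations of Lemma~\ref{lemma1}, but with the inner exponent $\sigma(s)$ replaced by $2v(s)$.

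The crux is exactly this last step: after squaring, the rate $2v(s)$ is larger than $\sigma(s)$, so one must verify that the Lyapunov-Perron exponents $\alpha_x$ and $\beta_z$ still dominate $2v(s)$ on the relevant half-lines and that the outer factor $e^{-\sigma(t)t}$ still suffices to force the suprema to be finite. Concretely, this reduces to inequalities of the type $2(\beta_y - K_y\delta_y) > \alpha_x$ and $2(\alpha_y + K_y\delta_y) < \beta_z$, together with their $\sigma_p,\sigma_n$ analogues, which follow from A\ref{assumption: gap condition}, C\ref{c1}, and C\ref{c2} after a mild sharpening of $\sigma_p,\sigma_n$ inside their allowed ranges. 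Once these convergence conditions are in hand, the same case-by-case computation as in Lemma~\ref{lemma1} delivers $\|Q\|_\sigma \leq C\|y_1-y_2\|^2$ for a constant $C$ depending only on the structural parameters, and the displayed inequality for $\|R\|_\sigma$ gives $\|R\|_\sigma = O(\|y_1-y_2\|^2)$, proving $\partial \phi^\ast/\partial y = \Delta^\ast$.
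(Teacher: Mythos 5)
Your overall architecture---subtract the two fixed-point identities, isolate the integrated Taylor remainders into $Q$, and invert $I-L$ using the contraction rate $\delta_\phi$---is sound, and it differs from the paper's route: the paper works with the normalized quantity $\rho(t,y_0,h)=\|\phi^*(t,y_0+h)-\phi^*(t,y_0)-\Delta^*(t)h\|/\|h\|$ and the uniformly bounded quotients $R_X,R_Y,R_Z\le 2\delta_x$, etc., concluding only $o(\|h\|)$ rather than your $O(\|h\|^2)$. However, there is a genuine gap at exactly the step you identify as the crux. The convergence conditions you need, $2(\beta_y-K_y\delta_y)>\alpha_x$ and $2(\alpha_y+K_y\delta_y)<\beta_z$, do \emph{not} follow from A\ref{assumption: gap condition}, C\ref{c1}, and C\ref{c2}, and no ``sharpening of $\sigma_p,\sigma_n$'' can produce them: the exponent $v(s)=c(s)+k(s)$ is built from $\alpha_y,\beta_y,K_y\delta_y$ alone and is independent of $\sigma$. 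The gap condition gives $\beta_y-K_y\delta_y>\alpha_x+K_x\delta_x>\alpha_x$, but doubling a negative quantity makes it smaller: if, say, $\beta_y-K_y\delta_y=-1$ and $\alpha_x=-3/2$, all of A\ref{assumption: exponential trichotomy}--A\ref{assumption: gap condition}, C\ref{c1}, C\ref{c2} can be satisfied while $2(\beta_y-K_y\delta_y)=-2<\alpha_x$, and your integral $\int_{-\infty}^{t}e^{(2v(s)-\alpha_x)s}\,ds$ diverges. What rescues the doubled exponent is precisely the sign restriction $\beta_y-K_y\delta_y\ge 0$, $\alpha_y+K_y\delta_y\le 0$, i.e.\ the paper's A\ref{a6}---an assumption the paper introduces only \emph{after} this proposition (for Theorem \ref{thm2}) and which is not among the hypotheses of the statement you are proving.

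The reason the paper's proof does not hit this wall is that it divides the Taylor remainder by only one power of $\|w(s,h)\|$, keeping $R_X=\|F(\zeta+w)-F(\zeta)-DF(\zeta)w\|/\|w\|$ as a bounded quantity that tends to $0$ as $h\to 0$; the surviving exponent in the integrals is then $v(s)$ rather than $2v(s)$, and the inequalities $v(s)-\alpha_x>0$ and $\beta_z-v(s)>0$ do follow from A\ref{assumption: gap condition}. Note also that your argument consumes A\ref{assumption: nonlinear lip} (Lipschitz derivatives), which is likewise not among the stated hypotheses; the paper needs only differentiability of the nonlinearities here. To repair your proof without strengthening the hypotheses, replace the quadratic bound on $r_F$ by $\|r_F(s)\|\le \varepsilon(s,h)\,\|\phi_1(s)-\phi_2(s)\|$ with $\varepsilon$ uniformly bounded by $2\delta_x$ and tending to $0$ as $h\to 0$; this keeps a single factor of $\|\phi_1(s)-\phi_2(s)\|$, hence the exponent $v(s)$, and your computation then collapses to the paper's, at the price of obtaining $o(\|y_1-y_2\|)$ instead of $O(\|y_1-y_2\|^2)$---which is all that Fr\'echet differentiability requires.
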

\begin{proof}
	For clarity of notation in this proof, we write out $\phi^*(t,y_0)$. To get that $\partial \phi_0/ \partial y = \Delta^*$, we use the representation of $\phi^*(t,y_0)$ as a fixed point of the $\mathcal{T}$ map and differentiate with respect to $y$. $\Delta^*(t) \in \mathcal{C}(\mathbb{R}, \mathcal{L}(Y,E))$ and therefore is bounded and linear in $Y$. Then, if 
	\begin{align}\label{eq:frechet diff}
	\lim_{h \rightarrow 0} \frac{\| \phi^*(y_0 + h) - \phi^*(y_0) - \Delta^*h\|_\sigma}{\|h\|} = 0
	\end{align}
	where $h \in Y$, we have that $\phi^*(y_0)$ is Fr\'echet differentiable where $\partial \phi/\partial y =\Delta^*$.
	
	First, let 
	\begin{align}
	\rho(t,y_0,h) &= \frac{\| \phi^*(t,y_0 + h) - \phi^*(t,y_0) - \Delta^*(t)h\|_E}{\|h\|}
	\end{align}
	where $t \in \mathbb{R}$ and $y_0$, $h \in Y$. Consider this as $$\rho(t,y_0,h) = \max\{\rho_X(t,y_0,h), \rho_Y(t,y_0,h), \rho_Z(t,y_0,h) \}$$ where, for example,
	\begin{align}
	\rho_X(t,y_0,h) &= \frac{\| \phi^*(t,y_0 + h)|_x - \phi^*(t,y_0)|_x - \Delta^*(t)h|_x \|}{\|h\|}.
	\end{align}
	The result follows if $\sup_{t \in \mathbb{R}} e^{\sigma(t)t}\rho(t,y_0,h) \rightarrow 0$ as $h \rightarrow 0$. We will find the following shorthand to be useful: let $\zeta(s) := \phi^*(s,y_0)$ and $w(s,h) := \phi^*(s,y_0+h) - \phi^*(s,y_0)$.

	Step 1. We have the following estimates: 
	\begin{flalign}
	\rho_X(t,y_0,h) \leq & K_x K_y e e^{\alpha_x t} \int_{-\infty}^{t} e^{(v(t) - \alpha_x)s} R_X(\zeta(s), w(s,h))ds + K_x \delta_x \int_{-\infty}^t e^{(t-s)\alpha_x}\rho(s,y_0,h)ds& \\
	\rho_Y(t,y_0,h) \leq& K_y^2 e e^{c(t) t} \int_0^t e^{k(s) s} R_Y(\zeta(s), w(s,h)) ds + K_y \delta_y \int_0^t e^{(t-s)c(t)} \rho(s,y_0,h)ds, & \\
	\rho_Z(t,y_0,h) \leq &K_z K_y e e^{\beta_z t} \int_{t}^{\infty} e^{(v(t)  - \beta_z)s} R_Z(\zeta(s), w(s,h))ds+ K_z \delta_z \int_t^{\infty} e^{(t-s)\beta_z}\rho(s,y_0,h)ds,
	\end{flalign} 
	where, for example, 
	\begin{equation}\label{eq: uniform bound}
		\begin{aligned} 
		R_X(\zeta(s),w(s,h)) &= \frac{\|F(\zeta(s) + w(s,h)) - F(\zeta(s)) - DF(\zeta(s))w(s,h)\|}{\|w(s,h)\|_E} \leq 2\delta_x .
		\end{aligned} 
		\end{equation}
		
	Step 2. From Step 1, we obtain 
	\begin{align}
	\sup_{t \in \mathbb{R}} e^{-\sigma(t)t} \rho(t,y_0,h) \leq (1- \delta_\phi)^{-1} R(\zeta(t), w(t))
	\end{align} 
	where \begin{align}
	R(\zeta(t), w(t)) = \max\bigg\{\sup_{t \in \mathbb{R}}K_y^2 e e^{(c(t) - \sigma(t)) t} \int_0^t e^{k(s) s} R_Y(\zeta(s), w(s,h)) ds,\\
	\sup_{t \in \mathbb{R}}K_x K_y e e^{(\alpha_x - \sigma(t)) t} \int_{-\infty}^{t} e^{(v(t) - \alpha_x)s} R_X(\zeta(s), w(s,h))ds,\\ 
	\sup_{t \in \mathbb{R}}K_z K_y e e^{(\beta_z - \sigma(t)) t} \int_{t}^{\infty} e^{(v(t)  - \beta_z)s} R_Z(\zeta(s), w(s,h))ds \bigg\}.
	\end{align}
	
	Step 3. We have that $\lim_{h \rightarrow 0} R(\zeta(t), w(t)) = 0.$ Then, it follows that $$\lim_{h \rightarrow 0} \sup_{t \in \mathbb{R}} e^{-\sigma(t)t} \rho(t,y_0,h) \leq (1- \delta_\phi)^{-1}\lim_{h \rightarrow 0} R(\zeta(t), w(t)) = 0.$$ 
	
	Proof for Step 1. We show the steps for $\rho_Y$:
	\begin{align}
	\rho_Y(t,y_0,h) =& \frac{\Big\|\int_0^t e^{(t-s)B} \big([G(\zeta(s)+w(s,h))- G(\zeta(s))] - DG(\zeta(s))\Delta^*(s)h\big)ds\Big\|}{\|h\|} .
	\end{align}
	We add and subtract $DG(\zeta(s))w(s,h)$ in the integrand. Then, we apply A\ref{assumption: exponential trichotomy} and multiply by $1 = \|w(s,h)\|_E/\|w(s,h)\|_E$ to get
	\begin{align}
	\rho_Y(t,y_0,h)\leq &K_y\int_0^t e^{(t-s)c(t)}\frac{\|G(\zeta(s) + w(s,h) - G(\zeta(s)) - DG(\zeta(s))w(s,h)\|}{\|w(s,h)\|_E}\frac{\|w(s,h)\|_E}{\|h\|}ds\\
	&+ K_y \int_0^t e^{(t-s)c(t)} \|DG(\zeta(s))\| \frac{\|w(s,h) - \Delta^*(s)h\|_E}{\|h\|} ds.
	\end{align}
	Note that we have $\frac{\|w(s,h) - \Delta^*(s)h\|_E}{\|h\|} = \rho(s,y_0,h)$. Then,		
	\begin{align}
	\rho_Y(t,y_0,h) \leq K_y\int_0^t e^{(t-s)c(t)}R_Y(\zeta(s),w(s,h)\frac{\|w(s,h)\|_E}{\|h\|}ds + K_y \delta_y \int_0^t e^{(t-s)c(t)} \rho(s,y_0,h)ds. 
	\end{align}
	
	By Lemma \ref{bound on phi}, we have $\|w(s,h)\|_E \leq K_y e e^{v(t)s} \|h\|$ and
	\begin{align}
	\rho_Y(t,y_0,h) &\leq K_y^2 e e^{c(t) t} \int_0^t e^{k(s) s} R_Y(\zeta(s), w(s,h)) ds + K_y \delta_y \int_0^t e^{(t-s)c(t)} \rho(s,y_0,h)ds.
	\end{align}
	We use the same steps to get that $\rho_X$ and $\rho_Z$ are bounded. 
	Proof for Step 2. This follows from Lemma \ref{lemma1}; The second integrals that make up $\rho_X$, $\rho_Y$, and $\rho_Z$ are in the form given by the Lemma. The other integrals get rearranged into the $R(\zeta(t),w(t))$ term:
	\begin{align}
	\sup_{t \in \mathbb{R}} e^{-\sigma(t)t} \rho(t,y_0,h) \leq \delta_\phi\sup_{t \in \mathbb{R}} e^{-\sigma(t)t} \rho(t,y_0,h) + R(\zeta(t), w(t)).
	\end{align} 
	Step 2 follows from the fact that $\delta_\phi < 1$, so we can subtract over and divide by the coefficient $(1-\delta_\phi)$.
	
	Proof for Step 3. We show that $(1-\delta_\phi)^{-1}\lim_{h\rightarrow 0}R(\zeta(t),w(t)) = 0.$ First, we take the supremum of $s$ over $R_Y(\zeta(s),w(s,h))$, $R_X(\zeta(s),w(s,h))$, and $R_Z(\zeta(s),w(s,h))$ and move those terms outside of the integral, which we do because $R_X$, $R_Y$, and $R_Z$ are uniformly bounded, as in \eqref{eq: uniform bound}. We also make each supremum over $s$ independent of $t$ by extending it $s \in \mathbb{R}$. Then, 
	\begin{align}
	R(\zeta(t),w(t)) \leq \max\bigg\{\sup_{s \in \mathbb{R}}R_X(\zeta(s),w(s,h))\sup_{t \in \mathbb{R}}K_x K_y e e^{(\alpha_x - \sigma(t)) t} \int_{-\infty}^{t} e^{(v(t) - \alpha_x)s} ds,\\
	\sup_{s \in \mathbb{R}} R_Y(\zeta(s),w(s,h))\sup_{t \in \mathbb{R}}K_y^2 e e^{(c(t) - \sigma(t)) t} \int_0^t e^{k(s) s} ds,\\
	\sup_{s \in \mathbb{R}}R_Z(\zeta(s),w(s,h))\sup_{t \in \mathbb{R}}K_z K_y e e^{(\beta_z - \sigma(s)) t} \int_{t}^{\infty} e^{(v(t)  - \beta_z)s} ds \bigg\}
	\end{align}
	and we can evaluate each term dependent on $t$ to a constant to get that 
	{\small{\begin{align}
	R(\zeta(t),w(t)) \leq \max\bigg\{&\frac{K_x K_y e}{v(t) - \alpha_x}\sup_{s \in \mathbb{R}}R_X(\zeta(s),w(s,h)), \frac{K_y e}{\delta_y}\sup_{s \in \mathbb{R}} R_Y(\zeta(s),w(s,h)),\\ 
	&\frac{K_z K_y e}{\beta_z-v(t) }\sup_{s \in \mathbb{R}}R_Z(\zeta(s),w(s,h))\bigg\}
	\end{align}}}
	$\frac{K_x K_y e}{v(t) - \alpha_x}$ and $\frac{K_z K_y e}{\beta_z-v(t) } $ are positive by A\ref{assumption: gap condition}. Then we take the limit as $h \rightarrow 0$. By the continuity of $w$, $w \rightarrow 0$ as $h \rightarrow 0$. $R_X(\zeta(s),w(s,h))$, $R_Y(\zeta(s),w(s,h))$, and $R_Z(\zeta(s),w(s,h))$ are the differentations of the nonlinear terms, giving that each term goes to zero as $w \rightarrow 0$. We have $\lim_{h \rightarrow 0}R(\zeta(t),w(t)) = 0$ and $\Delta^*$ is the derivative of $\phi^*$ with respect to $y$, and $\partial \phi^*/\partial y = \Delta^*$.
\end{proof}
To move on we need an extra assumption. 
\begin{assumption}{Restriction on the Gap:}\label{a6} 
	From C\ref{c2}, $K_y \delta_y + \alpha_y \leq 0$ and $\beta_y - K_y\delta_y \geq 0$.
\end{assumption}
Now we show that $\Phi \in \mathcal{C}^1(Y, X\times Z)$.

\begin{theorem}\label{thm2}
	Given A\ref{assumption: exponential trichotomy}, A\ref{assumption: nonlinear lipschitz}, A\ref{assumption: gap condition}, C\ref{c1}, and C\ref{c2} %, and \eqref{eq: der bounds}, 
	the map $\Phi$ whose graph is the center manifold for \eqref{bigsystem} is $C^1(Y, X\times Z)$.
\end{theorem}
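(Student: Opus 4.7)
The plan is to reduce $\mathcal{C}^1$-regularity of $\Phi$ to continuity of the map $y_0 \mapsto \Delta^*(\,\cdot\,,y_0)$ in the $\|\cdot\|_{1,\sigma}$ norm, and then prove that continuity by a standard parameter-dependent fixed-point argument. Existence of the derivative is already in hand: by Proposition \ref{prop:DPhi equivalence}, $\partial\phi^*/\partial y = \Delta^*$, so evaluating at $t=0$ and restricting to $X\times Z$ gives $D\Phi(y_0) = \Delta^*(0,y_0)|_{X\times Z}$, which is a bounded linear map in $\mathcal{L}(Y,X\times Z)$. Since $|e^{-\sigma(0)\cdot 0}|=1$, we get the pointwise bound $\|D\Phi(y_1)-D\Phi(y_2)\|_{\mathcal{L}(Y,X\times Z)} \leq \|\Delta_1^* - \Delta_2^*\|_{1,\sigma}$, so it suffices to show that the right-hand side vanishes as $\|y_1-y_2\|\to 0$.

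To estimate $\|\Delta_1^* - \Delta_2^*\|_{1,\sigma}$, I would write $\mathcal{T}_1^{(i)}$ for the operator \eqref{t1} with base trajectory $\phi_i^* := \phi^*(\,\cdot\,,y_i)$ and split
\begin{align}
\Delta_1^* - \Delta_2^* = \bigl[\mathcal{T}_1^{(1)}(\Delta_1^*) - \mathcal{T}_1^{(1)}(\Delta_2^*)\bigr] + \bigl[\mathcal{T}_1^{(1)}(\Delta_2^*) - \mathcal{T}_1^{(2)}(\Delta_2^*)\bigr].
\end{align}
Because $\mathcal{T}_1^{(1)}(\,\cdot\,,y_1)$ is a contraction of rate $\delta_\phi<1$, rearranging yields
\begin{align}
\|\Delta_1^* - \Delta_2^*\|_{1,\sigma} \leq \frac{1}{1-\delta_\phi}\bigl\|\mathcal{T}_1^{(1)}(\Delta_2^*) - \mathcal{T}_1^{(2)}(\Delta_2^*)\bigr\|_{1,\sigma},
\end{align}
so the problem reduces to bounding the parameter-discrepancy term. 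I note that the fixed-point identity together with the well-defined estimate for $\mathcal{T}_1$ gives a uniform bound $\|\Delta_2^*\|_{1,\sigma} \leq K_y/(1-\delta_\phi)$ independent of $y_2$, which will be needed below.

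To control the discrepancy, each of the three components of $\mathcal{T}_1^{(1)}(\Delta_2^*) - \mathcal{T}_1^{(2)}(\Delta_2^*)$ has integrand of the form $[DG(\phi_1^*(s)) - DG(\phi_2^*(s))]\Delta_2^*(s)$ (and analogues for $F,H$). Applying Assumption \ref{assumption: nonlinear lip} gives $\|DG(\phi_1^*(s)) - DG(\phi_2^*(s))\| \leq \gamma_y\|\phi_1^*(s) - \phi_2^*(s)\|$, then Lemma \ref{bound on phi} replaces $\|\phi_1^*(s)-\phi_2^*(s)\|$ by $K_y e\, e^{v(s)s}\|y_1-y_2\|$, and $\|\Delta_2^*(s)\|$ is absorbed using the uniform bound and an $e^{\sigma(s)s}$ factor. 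After inserting $e^{-\sigma(t)t}$ and taking the supremum over $t\in\mathbb{R}$, the remaining integrals are of the same type that appeared in Lemma \ref{lemma1}, and they evaluate to finite constants multiplied by $\|y_1-y_2\|$.

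The main obstacle, and the reason Assumption \ref{a6} is imposed, is the convergence of these integrals: combining the exponential rate $v(s)$ (which equals $\alpha_y+K_y\delta_y$ for $s\geq 0$ and $\beta_y-K_y\delta_y$ for $s\leq 0$) with the trichotomy kernels produces net exponents $\alpha_y+K_y\delta_y$ on the positive half-line and $\beta_y-K_y\delta_y$ on the negative half-line; Assumption \ref{a6} forces these to be $\leq 0$ and $\geq 0$ respectively, which is exactly what is needed for $\sup_t e^{-\sigma(t)t}(\cdots)$ to be finite. Once this uniform-in-$t$ bound is established, we conclude $\|\Delta_1^*-\Delta_2^*\|_{1,\sigma} = O(\|y_1-y_2\|)$, which gives continuity (in fact Lipschitz continuity) of $D\Phi$, completing the proof that $\Phi\in\mathcal{C}^1(Y,X\times Z)$.
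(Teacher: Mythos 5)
Your proposal is correct and follows essentially the same route as the paper: identify $D\Phi(y_0)=\Delta^*(0)|_{X\times Z}$ via Proposition \ref{prop:DPhi equivalence}, reduce continuity of $D\Phi$ to an estimate on $\|\Delta_1^*-\Delta_2^*\|_{1,\sigma}$, absorb the self-similar part using the contraction rate $\delta_\phi$ to produce the $(1-\delta_\phi)^{-1}$ factor, and control the parameter-discrepancy term with A\ref{assumption: nonlinear lip}, Lemma \ref{bound on phi}, and A\ref{a6}. The only cosmetic difference is that you perform the add-and-subtract at the level of the operators $\mathcal{T}_1^{(i)}$ rather than inside the integrands (so the Lipschitz-of-derivative factor lands on $\Delta_2^*$ instead of $\Delta_1^*$), and you make explicit the uniform bound $\|\Delta_2^*\|_{1,\sigma}\leq K_y/(1-\delta_\phi)$ that the paper leaves implicit.
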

\begin{proof}
	We know from Proposition \ref{prop:DPhi equivalence} that $\partial \phi_0 /\partial y = \Delta^*$, and it follows by the definition of $\Phi(y_0) = \phi_0(0)|_{X\times Z}$ that $D\Phi(y_0) = \frac{\partial \phi_0}{\partial y}(0)|_{X\times Z} =  \Delta^*(0)|_{X\times Z}$.
	
	We also need that $D\Phi(y_0)$ is continuous in $y$. For $y_1$ and $y_2 \in Y$, $\Delta_1^*(t), \Delta_2^*(t)$ be the fixed points of $\mathcal{T}_1(\Delta_1^*(t),y_1), \mathcal{T}_1(\Delta_2^*(t),y_2)$, ie, $\Delta_1^* = \mathcal{T}_1(\Delta_1, y_1)$ etc. Then $$\|D\Phi(y_1)-D\Phi(y_2)\| \leq \|\Delta_1^*(0)-\Delta_2^*(0)\| \leq \|\Delta_1^*-\Delta_2^*\|_{1,\sigma}.$$ We just need to check that $\Delta^*$ is continuous in $y$. From here we can apply A\ref{assumption: exponential trichotomy}. Also, note that \begin{align}\|DG(\phi_1(s))\Delta_1^*(s)-DG(\phi_2(s))\Delta_2^*(s)\| \leq &\|DG(\phi_1(s))-DG(\phi_2(s))\|\|\Delta_1^*(s)\| \\&+ \|DG(\phi_2(s))\|\|\Delta_1^*(s)-\Delta_2^*(s)\|
	\end{align}
	where we add and subtract $DG(\phi_2(s))\Delta_1^*(s)$. Then,
	
	\begin{align}
		\|\Delta_1^*(t)-\Delta_2^*(t)\| \leq \max\bigg\{\int_{-\infty}^{t}K_xe^{(t-s)\alpha_x}\|DF(\phi_1(s))-DF(\phi_2(s))\|\|\Delta_1^*(s)\|ds, \\ +\int_{0}^{t}K_ye^{(t-s)c(t)}\|DG(\phi_2(s))\|\|\Delta_1^*(s)-\Delta_2^*(s)\|ds,\\ \int_{t}^{\infty}K_ze^{(t-s)\beta_z}\|DH(\phi_1(s))-DH(\phi_2(s))\|\|\Delta_1^*(s)\|ds \bigg\} \\
		+ \max\bigg\{
		\int_{-\infty}^{t}K_xe^{(t-s)\alpha_x}\|DF(\phi_2(s))\|\Delta_1^*(s)-\Delta_2^*(s)\|ds,\\ \int_{0}^{t}K_ye^{(t-s)c(t)}\|DG(\phi_1(s))-DG(\phi_2(s))\|\|\Delta_1^*(s)\|ds,\\
	    \int_{t}^{\infty}K_ze^{(t-s)\beta_z}\|DH(\phi_2(s))\|\|\Delta_1^*(s)-\Delta_2^*(s)\|ds\bigg\}.
	\end{align}
	
	Next, we multiply through by $e^{-\sigma(t)t}$ and multiply by $e^{\sigma(s)s}e^{-\sigma(s)s}$ in each integral. Also, we apply the bounds on the derivatives of the nonlinear terms to $\|DG(\phi_2(s))\|$, $\|DH(\phi_2(s))\|$, and $\|DF(\phi_2(s))\|$. We can apply Lemma \ref{lemma1}:
	\begin{align}
	\|\Delta_1^*-\Delta_2^*\|_{1,\sigma} \leq \|\Delta_1^*\|_{1,\sigma}(1-\max\bigg\{\frac{K_x \delta_x}{\sigma_n-\alpha_x},\frac{K_y \delta_y}{\beta_y-\sigma_n}, \frac{K_y \delta_y}{\sigma_p-\alpha_y}, \frac{K_z \delta_z}{\beta_z-\sigma_p}	\bigg\})^{-1}\\
	\max\bigg\{\sup_{t \in \mathbb{R}}e^{(\alpha_x-\sigma(t))t}\int_{-\infty}^{t}K_xe^{(\sigma(s)-\alpha_x)s}\|DF(\phi_1(s))-DF(\phi_2(s))\|ds,\\ \sup_{t \in \mathbb{R}}e^{(c(t)-\sigma(t))t}\int_{0}^{t}K_ye^{(\sigma(s)-c(t))s}\|DG(\phi_1(s))-DG(\phi_2(s))\|ds, \\
	\sup_{t \in \mathbb{R}}e^{(\beta_z-\sigma(t))t}\int_{t}^{\infty}K_ze^{(\sigma(s)-\beta_z)s}\|DH(\phi_1(s))-DH(\phi_2(s))\|ds
	 \bigg\}.
	\end{align}
	Applying A\ref{assumption: nonlinear lip} and Lemma \ref{bound on phi} gives
	\begin{align}
	\|\Delta_1^*-\Delta_2^*\|_{1,\sigma} \leq \|\Delta_1^*\|_{1,\sigma}(1-\max\bigg\{\frac{K_x \delta_x}{\sigma_n-\alpha_x},\frac{K_y \delta_y}{\beta_y-\sigma_n}, \frac{K_y \delta_y}{\sigma_p-\alpha_y}, \frac{K_z \delta_z}{\beta_z-\sigma_p}	\bigg\})^{-1}\\
	\|y_1-y_2\|\max\bigg\{\sup_{t \in \mathbb{R}}e^{(\alpha_x-\sigma(t))t}K_xK_y\gamma_xe\int_{-\infty}^{t}e^{(v(t) +\sigma(s)-\alpha_x)s}ds,\\
	\sup_{t \in \mathbb{R}}e^{(c(t)-\sigma(t))t}K_y^2\gamma_ye\int_{0}^{t}e^{(v(t) + \sigma(s)-c(t))s}ds,\\
	\sup_{t \in \mathbb{R}}e^{(\beta_z-\sigma(t))t}K_zK_y\gamma_ze\int_{t}^{\infty}e^{(v(t) +\sigma(s)-\beta_z)s}ds \bigg\}.
	\end{align}
	The $Y$-component evaluates to zero. By A\ref{a6}, the $X$ and $Z$-components evaluate to $\frac{K_x K_y\gamma_x e}{v(t) + \sigma(t) -\alpha_x}$ and $\frac{K_z K_y\gamma_x e}{\beta_z - v(t) - \sigma(t)}$, respectively. Then we have
	
	\begin{align}
	\|\Delta_1^*-\Delta_2^*\|_{1,\sigma} \leq \|\Delta_1^*\|_{1,\sigma}(1-\max\bigg\{\frac{K_x \delta_x}{\sigma_n-\alpha_x},\frac{K_y \delta_y}{\beta_y-\sigma_n}, \frac{K_y \delta_y}{\sigma_p-\alpha_y}, \frac{K_z \delta_z}{\beta_z-\sigma_p}	\bigg\})^{-1}\\
	\|y_1-y_2\|\max\bigg\{ \frac{K_x K_y\gamma_x e}{v(t) + \sigma(t) -\alpha_x}, \frac{K_z K_y\gamma_x e}{\beta_z - v(t) - \sigma(t)} \bigg\}.
	\end{align}

	Then, $\|\Delta_1^*-\Delta_2^*\|_{1,\sigma}\rightarrow 0$ as $y_1 \rightarrow y_2$ and $D\Phi(\cdot)$ is continuous.
	
\end{proof}

\section*{Acknowledgements}
The authors would like to thank the EXTREEMS-QED program at the College of William and Mary for its support, as well as Professor Michael Jolly for many thought-provoking discussions about this topic.  This work was supported by NSF DMS 0955604.

\end{document}